\newtheorem{theorem}{Theorem}
\newtheorem{lemma}[theorem]{Lemma}
\newtheorem{definition}[theorem]{Definition}
\newtheorem{proposition}[theorem]{Proposition}
\newtheorem{remark}[theorem]{Remark}
\newtheorem{example}[theorem]{Example}
\numberwithin{theorem}{section}
\numberwithin{equation}{section}
\begin{document}

\title[Minimal Jointly Uniform Attractor for NRDS]{Minimal jointly uniform attractor for nonautonomous random dynamical systems}

\author[P. Catuogno]{Pedro Catuogno$^1$}
\thanks{$^{1,3}$Mathematics Department, State University of Campinas (UNICAMP), Brazil. Partially supported by São Paulo Research Foundation (FAPESP), Brazil. Process Number 2020/04426-6. $^{1}$e-mail: pedrojc@unicamp.br and $^3$e-mail: ruffino@unicamp.br }

\author[A. N. Oliveira-Sousa]{Alexandre N. Oliveira-Sousa$^2$}
\thanks{$^2$Mathematics Department, Federal University of Santa Catarina (UFSC), Brazil. Partially supported by São Paulo Research Foundation (FAPESP), Process Number 2022/00176-0. e-mail: alexandre.n.o.sousa@ufsc.br}

\author[P. Ruffino]{Paulo Ruffino$^3$}



\keywords{}
\date{}
\begin{abstract}
We introduce a notion of minimal uniform attractor for nonautonomous random dynamical systems, which depends jointly on time and on a random parameter.
Several examples are provided to illustrate the concept and to compare it with existing notions of uniform attractors in the literature. 
We further apply the abstract theory to nonautonomous random differential equations with a non-compact symbol space. In particular, we develop a method to compactify the symbol space,
by adapting techniques from the theory of deterministic nonautonomous differential equations. We also establish the stability of the minimal jointly uniform attractor by exploiting the relationship between deterministic and random dynamics.
Finally, we show that such structures arise naturally in stochastic differential equations whose noise terms carry additional time dependence, by establishing a topological conjugacy between the resulting stochastic flows and suitable random differential equations.
\end{abstract}

\maketitle

\section{Introduction}

The general theory of attractors is crucial to understand long time behaviour of any kind of dynamical systems. In particular, for nonautonomous and  random evolution equations the theory has been vastly studied and a large number of relevant problems are still open in the area. See, among many others, for instance the books \cite{Bortolan-Carvalho-Langa-book}, \cite{Chepyzhov-Vishik-book}, a large variety of of articles, among them, the classical \cite{Crauel-Flandoli-94}, more recents \cite{Cui-Langa-17}, \cite{Bortolan-Carvalho-Langa}, \cite{Caraballo-Carvalho-Oliveira-Sousa-NRA},  \cite{Fan-Su-Sun-26}, \cite{Crauel-Scheutzow-2018}, just to mention few of them whose approach are closer to this article.

In many relevant problems in the literature, some sort of uniformty of the attractors show up, in the sense that approximations to compact sets  happen  uniformly with respect to certain parameter (say, initial or final time, shift in the noise, asymptotic parameter, etc). In \cite{Chepyzhov-Vishik-book} it is studied uniform attractors for nonautonomous dynamical systems and the results are applied to evolution differential equations.
Recently, for deterministic differential equations, there were some achievememnts in the theory of uniform attractors: on the structure of this uniform attractors\cite{Carvalho-Langa-Obaya-Rocha}, attractors under perturbation \cite{Bortolan-Carvalho-Langa,Bortolan-Carvalho-Langa-book}, and results, where the driving semigroup is not invertible \cite{Carvalho-Langa-Robinson-20}. 

For some special random dynamical systems, which have a compact deterministic set which attracts uniformly with respect to the symbol space, it is possible to construct a minimal compact set that attracts uniformly, using uniform omega-limit sets, see \cite[Chap. VII]{Chepyzhov-Vishik-book}.

In \cite{Cui-Langa-17}, they study uniform attractors for nonautonomous random dynamical systems. More precisely, the authors introduce the notion of uniform attractor for a nonautonomous random dynamical system for a cocycle $\varphi$ with a product driving group acting in a symbol space $\Sigma\times \Omega$, where $\Sigma$ is the hull of a nonautonomous function and $\Omega$ is a probability space. In that situation, the uniform random attractor is a compact random set that attracts uniformly with respect to $\Sigma$. In their applications, they consider the equation
	\begin{equation}
	du=f(u) dt +g(t) dt+u\circ dW_t,
	\end{equation}
where  $\Sigma$ is the hull of $g$, so that the nonautonomous term $g(t)$ and the noise are treated separately. Our approach here considers nonautonomous terms together with the noise: 
\begin{equation}\label{int-eq-nonautonomous-noise}
	du=f(u) dt +\kappa(t)u\circ dW_t.
	\end{equation}
    
We show in Theorem \ref{Thm-conjugation-NRDS} that there is a random conjugacy between  the stochastic equation \eqref{int-eq-nonautonomous-noise} and a nonautonomous random differential equation, 
\begin{equation}\label{int-eq-NRdifferentialEq-prototype}
    \dot{u}=h(\beta(t,\theta_t\omega),u), t\geq \tau,
\end{equation}
Here, the time dependence cannot be separated from the noise, so it is not sufficient to consider only the hull of $g$: now $\beta$ is a stochastic process, i.e. for each $\omega$ fixed in $\Omega$, we have to consider the hull of the process $t\mapsto\beta(t,\theta_t\omega)$. 
In fact, we investigate the equation \eqref{int-eq-NRdifferentialEq-prototype} through two different methodologies. Firstly, we utilize the standard framework of nonautonomous random dynamical systems. Secondly, for each fixed $\omega \in \Omega$, we explore the dynamics via the skew-product semiflow associated with the equation. Specifically, we establish a relationship between the random and deterministic dynamics, which we employ here to understand of minimal uniform attractors.

 Actually, in Secton 2, in a abstract setting, we introduce the notion of a minimal uniform attractor for nonautonomous random dynamical systems, which depends jointly on time and on the random parameter. Then several examples are provided to illustrate the concept and to compare it with existing notions of uniform attractors in the literature. A theorem of existence is proved under the usual hypothesis of the existence of a family of compact absorbing sets. 
We further apply the abstract theory to nonautonomous random differential equations with a non-compact symbol space. In Section 3, we compactify the space of symbols by adapting techniques from the theory of deterministic nonautonomous differential equations. We also establish the stability of the minimal jointly uniform attractor by exploiting the relationship between deterministic and random dynamics, Theorem \ref{th-Stability}.
In Section 4, we explore particular cases of minimal jointly uniform attractors for autonomous RDS as corollaries of the nonautonomous case.
 Finally, in Section 5, we consider applications to SDE and PDEs and establish a cocycle conjugacy between a class of stochastic flows and random differential equations.

\section{Minimal joint uniform random attractors}
\label{sec:preliminaries}

This section introduces the concept of a minimal joint uniform attractor for nonautonomous random dynamical systems. We establish a theorem proving its existence, provide examples to illustrate its properties, and compare it with existing definitions of uniform attractors in stochastic or random dynamics, emphasizing its distinct advantages.

Let $(\Omega,\mathcal{F},\mathbb{P})$ a probability space endowed with  a \textbf{metric dynamical system} $\theta$, i.e., a family of measure-preserving  mappings $\{\theta_t:\Omega\rightarrow\Omega; \, t\in\mathbb{R}\}$ such that $(t,\omega)\mapsto \theta_t\omega$ is measurable, 	 $\theta_0=Id_\Omega$ and $\theta_{t+s}=\theta_t\circ \theta_s$, for all $t,s\in\mathbb{R}$. Typically $(\Omega,\mathcal{F},\mathbb{P})$ is the classical sample space of the 2-sided Wiener process with the canonical  shifts given by  $\theta_ t \, \omega (\cdot)= \omega (\cdot + t)-\omega(t)$ for all $t\in \mathbb{R}$.
	 
	Given a metric dynamical system $\{\theta_t:\Omega\rightarrow\Omega: \, t\in \mathbb{R}\}$ we define the shift operator $\Theta_t(\tau,\omega):=(t+\tau,\theta_t\omega)$ for each $(\tau,\omega)$ and and $t\in\mathbb{R}$, in the enlarged metric DS defined in  $\mathbb{R}\times\Omega$, . 
    
    Let $(X,d)$ be a complete separable metric space.
	We say that a family of maps $\{\varphi(t,\tau,\omega):X\to X; \  (t,\tau,\omega)\in\mathbb{R}^+\times\mathbb{R}\times\Omega\}$ 
	is a \textbf{nonautonomous random dynamical system} 
    (\textbf{NRDS} for short) driven by $\Theta$, which we denote simply by $(\varphi,\Theta)$, if
	\begin{enumerate}
		\item the mapping
		$\mathbb{R}^+ \times \Omega\times X\ni (t, \omega,x)\mapsto \varphi(t,\tau,\omega)x\in X$
		is measurable
		for each fixed $\tau\in\mathbb{R}$;
		\item
		$\varphi(0,\tau,\omega)=Id_X$,
		for each $(\tau,\omega)\in\mathbb{R}\times\Omega$;
		\item 
		$\varphi(t+s,\tau,\omega)=\varphi(t,\Theta_s(\tau,\omega))\, \circ \varphi(s,\tau,\omega)$,
		for every $t,s \in 
		\mathbb{R}^+ $, and $(\tau,\omega)\in\mathbb{R}\times \Omega$.
	\end{enumerate}

	We assume that the NRDS is continuous, i.e. the mapping
        $\mathbb{R}^+\times X\ni (t,x)\mapsto \varphi(t,\tau, \omega)x\in X \hbox{ is continuous, for each } (\tau, \omega)\in \mathbb{R}\times \Omega.$
        In this case, for each $(\tau,\omega)\in \mathbb{R}\times\Omega$, we obtain an \textit{evolution process} given by 
	\begin{equation*}
	\Phi_{\tau,\omega}=\{\Phi_{\tau,\omega}(t,s):=\varphi(t-s,s+\tau,\theta_s\omega)\,; \, t\geq s \},
	\end{equation*}
		i.e., the family of maps $\Phi_{\tau,\omega}$ satisfies
		\begin{enumerate}
			\item $\Phi_{\tau,\omega}(t,t)=Id_X$, for each $t\in \mathbb{R}$;
			\item $\Phi_{\tau,\omega}(t,s)\circ \Phi_{\tau,\omega}(s,r)=\Phi_{\tau,\omega}(t,r)$, for each
			$t\geq s\geq r$;
			\item the mapping $\{(t,s)\in \mathbb{R}^2: t\geq s\}\times X\ni (t,s,x)\mapsto \Phi_{\tau,\omega}(t,s)x\in X$ is continuous. 
		\end{enumerate} 

When $X$ is a Banach space, there is a well known class of examples of continuous NRDS associated to non-autonomous random differential equations of the type:
\begin{equation}\label{eq-general-NRDE-generates-NRDS}
    \dot{u}=f(\Theta_t(\tau,\omega),u), \ t\geq 0, \ u(0)=x_0\in X.
\end{equation}
where $f:\mathbb{R}\times \Omega\times X\to X$ is such that the equation is globally well-posed. For each $(\tau,\omega,x_0)\in \mathbb{R}\times \Omega\times X$, the solution of Problem \eqref{eq-general-NRDE-generates-NRDS} 
is denoted by
\begin{equation*}
    \varphi(\cdot,\tau,\omega)x_0:[0,+\infty)\to X,
\end{equation*}
which defines a NRDS $(\varphi,\Theta)$ in $X$.
In Section \ref{sec:application-SDEs}, it is shown that some non-autonomous stochastic differential equations (see Equation \eqref{eq-Stratonovich-SDE}) can be seen as a non-autonomous random differential equation as Equation \eqref{eq-general-NRDE-generates-NRDS}, due to a conjugacy of the associated dynamical systems, see Theorem \ref{Thm-conjugation-NRDS}.

The next result establishes a class of examples which reinforce the jointly dependence of both variables $(\tau,\omega)\in \mathbb{R}\times \Omega$.

\begin{proposition}\label{remark-generation-NRDS} Assume that for each
$(\tau,\omega,x_0)\in \mathbb{R}\times \Omega\times X$ the problem
    \begin{equation}\label{eq-class-NRDE-with-randomm-parameter}
\dot{v}=g(\omega,v)+h(t,\theta_t\omega,v), \ t> \tau, \ v(\tau)=x_0\in X,
\end{equation} 
has a unique solution $v(t,\tau,\omega,x_0)$ for all $t\geq \tau$. Then 
the solution flow of Equation $(\ref{eq-class-NRDE-with-randomm-parameter})$ induces a NRDS $(\varphi,\Theta)$ defined by
\begin{equation*}
\varphi(t,\tau,\omega)x_0=v(t+\tau, \tau,\theta_{-\tau}\omega,x_0), \ \forall \ t\geq 0.
\end{equation*}  
\end{proposition}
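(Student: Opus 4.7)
The plan is to verify the three defining properties of an NRDS for the candidate $\varphi(t,\tau,\omega)x_0 := v(t+\tau,\tau,\theta_{-\tau}\omega,x_0)$, relying on (i) the uniqueness assumption on the solutions of \eqref{eq-class-NRDE-with-randomm-parameter}, and (ii) the group property of the metric dynamical system $\theta$. The key observation is a careful bookkeeping of the noise parameter: the shift $\theta_{-\tau}$ in the definition is designed precisely so that when one later evaluates at $\Theta_s(\tau,\omega) = (s+\tau,\theta_s\omega)$, the shifts cancel to produce the same noise $\theta_{-\tau}\omega$ appearing on both sides of the cocycle identity.

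First, the identity property is immediate: by the initial condition in \eqref{eq-class-NRDE-with-randomm-parameter},
\begin{equation*}
\varphi(0,\tau,\omega)x_0 \;=\; v(\tau,\tau,\theta_{-\tau}\omega,x_0) \;=\; x_0.
\end{equation*}

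For the cocycle property, fix $t,s\geq 0$ and $(\tau,\omega)\in\mathbb{R}\times\Omega$, and set $y_0 := \varphi(s,\tau,\omega)x_0 = v(s+\tau,\tau,\theta_{-\tau}\omega,x_0)$. Unwinding the definitions,
\begin{equation*}
\varphi(t,\Theta_s(\tau,\omega))\,y_0 \;=\; \varphi(t,s+\tau,\theta_s\omega)\,y_0 \;=\; v\bigl(t+s+\tau,\,s+\tau,\,\theta_{-(s+\tau)}\theta_s\omega,\,y_0\bigr),
\end{equation*}
and using $\theta_{-(s+\tau)}\theta_s = \theta_{-\tau}$ this becomes $v(t+s+\tau,s+\tau,\theta_{-\tau}\omega,y_0)$. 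On the other hand,
\begin{equation*}
\varphi(t+s,\tau,\omega)x_0 \;=\; v(t+s+\tau,\,\tau,\,\theta_{-\tau}\omega,\,x_0).
\end{equation*}
Both expressions solve \eqref{eq-class-NRDE-with-randomm-parameter} with the same random parameter $\theta_{-\tau}\omega$: the right-hand side is the unique solution on $[\tau,\infty)$ starting from $x_0$ at $\tau$; the left-hand side is the unique solution on $[s+\tau,\infty)$ starting from $y_0$ at $s+\tau$, but $y_0$ equals the value at $s+\tau$ of the first solution. By uniqueness the two solutions coincide on $[s+\tau,\infty)$, which yields the cocycle property.

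Finally, measurability of $(t,\omega,x)\mapsto \varphi(t,\tau,\omega)x$ for each fixed $\tau$ reduces to measurability of $(t,\omega,x)\mapsto v(t+\tau,\tau,\theta_{-\tau}\omega,x)$. This follows from the joint measurability of the solution map $(t,\omega,x)\mapsto v(t,\tau,\omega,x)$, guaranteed by standard Carathéodory-type arguments under the well-posedness hypothesis, composed with the measurable shift $\omega\mapsto\theta_{-\tau}\omega$ provided by the metric dynamical system. The only step that requires genuine care is the cocycle identity, where the substitution $\theta_{-(s+\tau)}\theta_s = \theta_{-\tau}$ must align the noise parameters before uniqueness can be invoked; the remaining verifications are routine.
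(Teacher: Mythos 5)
Your proof is correct, and it takes a more self-contained route than the paper. The paper does not verify the cocycle axioms directly: it rewrites \eqref{eq-class-NRDE-with-randomm-parameter} in the canonical form \eqref{eq-general-NRDE-generates-NRDS}, $\dot u = f(\Theta_t(\tau,\omega),u)$, with the substituted nonlinearity $f(\tau,\omega,u)=g(\theta_{-\tau}\omega,u)+h(\tau,\omega,u)$, checks via the identity $\theta_{-(\tau+t)}\theta_t\omega=\theta_{-\tau}\omega$ and uniqueness that $u(t,\tau,\theta_{-\tau}\omega,x_0)=v(t+\tau,\tau,\theta_{-\tau}\omega,x_0)$, and then appeals to the standard generation result for equations of that form (citing Caraballo--Carvalho--Langa--Oliveira-Sousa). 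You instead verify the NRDS properties by hand, with the cocycle identity resting on exactly the same group identity $\theta_{-(s+\tau)}\theta_s=\theta_{-\tau}$ and on uniqueness of the initial value problem at time $s+\tau$ with data $y_0=v(s+\tau,\tau,\theta_{-\tau}\omega,x_0)$; your bookkeeping is right, since with parameter $\theta_{-\tau}\omega$ both sides solve the same nonautonomous vector field $g(\theta_{-\tau}\omega,\cdot)+h(t,\theta_{t-\tau}\omega,\cdot)$ in absolute time. What the paper's reduction buys is that the cocycle verification and measurability come packaged from the cited framework; what your direct argument buys is transparency and independence from that reference. On measurability, note that the proposition's hypothesis is only well-posedness, so your appeal to ``Carath\'eodory-type arguments'' implicitly assumes measurability of $g$ and $h$ in $\omega$ (and joint measurability of the solution map); the paper glosses over this point in the same way by delegating it to the reference, so this is a presentational caveat rather than a gap.
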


\begin{proof}
     We prove that Equation \eqref{eq-class-NRDE-with-randomm-parameter} can be written as Equation \eqref{eq-general-NRDE-generates-NRDS}, with 
     $f: \mathbb{R}\times \Omega\times X\to X$ defined by
\begin{equation}\label{eq-nonlinearity-example-g(omega)}
    f(\tau,\omega,u):=g(\theta_{-\tau}\omega,u)+h(\tau,\omega,u), \hbox{ for } (\tau,\omega,u) \in \mathbb{R}\times \Omega\times X.
\end{equation}
Let $u(t,\tau,\omega,u_0)=v(t+\tau, \tau,\omega,u_0)$, for every $t\geq 0$. 
Now, the argument follows a similar approach to \cite[Section 3]{Caraballo-Carvalho-Langa-OliveiraSousa-2021}.
We first note that $u(t, \tau,\theta_{-\tau}\omega,u_0)$ is a solution of 
\begin{equation}\dot{u}=g(\theta_{-\tau}\omega,u)+h(t+\tau,\theta_{t}\omega,u), \ t> 0, \ u(0)=x_0\in X.
\end{equation}
Since $\theta_{-(\tau+t)}\theta_t\omega=\theta_{-\tau}\omega,$ for all $t\in \mathbb{R}$, it is easy to see that $u$ satisfies
equation \eqref{eq-general-NRDE-generates-NRDS}, with  non-linearity $f$ given by \eqref{eq-nonlinearity-example-g(omega)}. 
Therefore, 
by uniqueness of solutions we have
\begin{equation*}
u(t, \tau,\theta_{-\tau}\omega,x_0)
        =v(t+\tau, \tau,\theta_{-\tau}\omega,x_0), \ \forall \ t\geq 0,
\end{equation*}  
and the proof is complete. 
\end{proof}

The technique outlined above for deriving 
represents the usual approach to generating a nonautonomous random dynamical system (NRDS), see e.g. \cite{Caraballo-Carvalho-Langa-OliveiraSousa-2021,Caraballo-Carvalho-Oliveira-Sousa-NRA,Bixiang-Wang-existence}, in Section \ref{sec-Stability-of-MJUA} we will propose a different approach.

In general, solutions of a random equations of the form \eqref{eq-class-NRDE-with-randomm-parameter}
are not cocycles under the usual metric dynamical system $\theta$. But it is indeed a cocycle in the joint metric DS defined by $\Theta$ over $\mathbb{R}\times \Omega$.


    Our goal now is to introduce a concept of \textit{minimal jointly uniform attractor} for the NRDS $(\varphi,\Theta)$, where the attraction will be uniform with respect to $\{\Theta_s(\tau,\omega): s\in \mathbb{R}\}=
\{(s,\theta_{s-\tau}\omega): s\in \mathbb{R}\}$,
for each $(\tau,\omega)\in \mathbb{R}\times \Omega$ fixed.

    Firstly, we recall measurability for multivalued functions. 
	Let $K:\Omega\to 2^X$ be a set-valued mapping with closed nonempty images. We say that $K$ is a \textbf{random set} if the mapping
	$\Omega\ni \omega \mapsto d(x,K(\omega))$ is $(\mathcal{F}, \Sigma_\mathbb{R})$-measurable for every fixed $x\in X$, see \cite[Chapter III]{CastaingValadier}.
    
\begin{definition} \label{Def: URA}
\emph{Let $\varphi$ be a NRDS, we say that a family of compact sets $\{\mathcal{A}_U(\tau,\omega): (\tau,\omega)\in \mathbb{R}\times \Omega\}$ is a \textbf{minimal jointly uniform attractor (MJUA)} if 
	\begin{enumerate}
		\item $\mathcal{A}_U$ is $\Theta$-invariant, i.e., $\mathcal{A}_U(\tau,\omega)=\mathcal{A}_U(\Theta_t(\tau,\omega))$, for each $(\tau,\omega)\in \mathbb{R}\times \Omega$;
		\item $\omega\mapsto A_U(\tau,\omega)$ is a random set, for each $\tau\in \mathbb{R}$ fixed;
		\item for each bounded set $B\subset X$, we have
		\begin{equation}
		\lim_{t\to +\infty}\sup_{s\in \mathbb{R}}\  dist(\varphi(t,\Theta_s(\tau,\omega))B,\mathcal{A}_U(\tau,\omega))=0, \ \hbox{ for all } (\tau,\omega)\in \Omega,
		\end{equation}
        where $dist(A,B)=\sup_{a\in A}\inf_{b\in B} d(a,b)$ is the Hausdorff semi-distance between $A$ and $B$;
        \item $\mathcal{A}_U(\tau,\omega)$ is the minimal closed family with Property (3).
	\end{enumerate} }
\end{definition}

The joint dependence appears from $\Theta$-invariance, i.e. 
$\mathcal{A}_U(\tau,\omega)=\mathcal{A}_U(0,\theta_{-\tau}\omega)$. 
Nevertheless, note that, starting from the initial time $\tau=0$, one can recover  $\mathcal{A}_U(\tau,\omega)$ for any $(\tau, \omega)$. For this reason, whenever convenient, we shall simplify the notation to 
\begin{equation}\label{eq-remark-joint-dependence-MJUA}
\mathcal{A}_U(\omega):=\mathcal{A}_U(0,\omega).
\end{equation}

There are many approaches of random attractors in the literature, see e.g. Arnold \cite{Arnold}, Crauel and Scheutzow \cite{Crauel-Scheutzow-2018}, Crauel and Flandoli \cite{Crauel-Flandoli-94}, among many others. As far as we know, among them, the concepts which are closely related to our Definition \ref{Def: URA} are the following: 
\begin{itemize}
    \item Chepyzhov and Vishik \cite[Chap. IV]{Chepyzhov-Vishik-book} define a deterministic minimal uniform attractor that attracts bounded subsets with respect to the initial time $s$. Definition \ref{Def: URA} extends this concept to NRDS. Indeed, for each pair $(\tau,\omega)$, the compact set 
    $\mathcal{A}_U(\tau,\omega)$ serves as a uniform attractor (with respect to $s$) for the evolution process $\{\Phi_{\tau, \omega}(t,s): t\geq s\}$; moreover, the family $\{\mathcal{A}_U(\tau,\omega):(\tau,\omega)\in \mathbb{R}\times \Omega\}$ is $\Theta$-invariant and measurable in the sense of (1) and (2) of Definition \ref{Def: URA}, respectively.
    \item Cui and Langa \cite{Cui-Langa-17} introduce the notion of uniform attractor for a nonautonomous random dynamical system for a cocycle $\varphi$ with a product driving group acting in a symbol space $\Sigma\times \Omega$, where $\Sigma$ is the hull of a nonautonomous function and $\Omega$ is the probability space. In that situation, the uniform random attractor is a compact random set that attracts uniformly with respect to $\Sigma$. In Definition \ref{Def: URA} , instead, the attraction is uniform with respect to the orbit $\{\Theta_t\omega_{\tau}: t\in \mathbb{R}\}$ of each element $\omega_{\tau}\in \mathbb{R}\times \Omega$.
	More precisely, in \cite{Cui-Langa-17} the nonautonomous term $g(t)$ and the noise are treated separately, for example, in the equation
	\begin{equation}
	du=f(u) dt +g(t) dt+u\circ dW_t,
	\end{equation}
the space of sysmbols  $\Sigma$ is the hull of $g$. Our approach here intertwine time $t$ and noise $\theta_t \omega$ such that one can consider nonautonomous diffusion terms like 
\begin{equation}
	du=f(u) dt +g(t) dt +\kappa(t)u\circ dW_t,
	\end{equation}
    as is shown is Section \ref{sec:application-SDEs}.
\end{itemize}

\medskip
\begin{example}  \label{Ex: paulo} 
\emph{
Let $\Omega = S^1$ with canonical coordinates $\omega\in [0,2\pi) \mapsto e^{i\omega}$. Take the normalized Lebesgue measure in $S^1$ and the measure preserving shift $\theta_t \omega = e^{i(t+\omega)}$. 
let $\beta:\mathbb{R}\times \Omega\to \mathbb{R}$ be a stochastic process such that 
\begin{equation*}
    \lim_{t\to +\infty}\beta(\Theta_t(\tau,\omega))=0, \ \forall \, (\tau,\omega)\in \mathbb{R}\times \Omega.
\end{equation*}
Thus, it is straightforward to verify that the nonautonomous random scalar differential equation
\begin{equation}\label{eq-first-example-R}
    \dot{x}= -x + \sin (\theta_{-\tau} \omega)+\beta(\Theta_t(\tau,\omega)), \ t\geq 0, \ x(0)= x_0,
\end{equation}
generates a NRDS $(\varphi,\Theta)$ (by Proposition \ref{remark-generation-NRDS}), where 
$\varphi(t, \tau, x_0)$ is a cocycle driven by $\Theta_t (\tau, \omega)= (\tau+t, \theta_t \omega)$. Moreover, $(\varphi,\Theta)$ has a MJUA 
$\{\mathcal{A}_U(\tau,\omega):(\tau,\omega)\in \mathbb{R}\times \Omega\}$ given by 
$$ 
\mathcal{A}_U(\tau,\omega)= \{ \sin ( \omega - \tau) \}, \ (\tau,\omega)\in \mathbb{R}\times \Omega,
$$
which naturally depends jointly on $\tau$ and on $\omega$. In fact, the space of symbols is a cylinder where the orbits of the shift $\Theta_t$ are helices. The MJUA depends on each helix the initial symbol $(\tau, \omega)$ belongs.
We remark that the previous theory of uniform attractors from \cite[Chapter VII]{Chepyzhov-Vishik-book}  applied to Example \eqref{eq-first-example-R}  provides a rougher uniform attractor, given by $[-1,1]$, which is the smallest compact subset of $\mathbb{R}$ that attracts bounded subsets uniformly with respect to $(\tau,\omega)\in \mathbb{R}\times \Omega$, in the sense of \eqref{eq-stronger-unifom-AK}. 
Similarly, uniform random attractor of Cui and Langa \cite{Cui-Langa-17} also is given by $[-1,1]$, since the uniformity is considered with respect to $\tau\in \mathbb{R}$, by taking $\Sigma=\mathbb{R}$. Therefore, the MJUA provides a more precise (a singleton) family of compact sets that attracts uniformly.  
}
\end{example}

Inspired by the deterministic ideas in \cite[Chapter IV]{Chepyzhov-Vishik-book}, we prove a result on the existence of minimal uniform attractors for NRDS.
Firstly, for this type of uniform attraction, the corresponding omega-limit set is defined as follows.
\begin{definition}
    Let $B$ be a subset of $X$. The {\bf uniform omega-limit set of $B$} is defined as 
    \begin{equation*}
        L_U(B,\tau,\omega)
        =\bigcap_{r\geq 0}\overline{\bigcup_{t\geq r}\bigcup_{s\in\mathbb{R}}\varphi(t,\Theta_s(\tau,\omega))B
        }, \ (\tau,\omega)\in \mathbb{R}\times \Omega.
    \end{equation*}
\end{definition}

    Note that, $L_U(B,\cdot)$ is $\Theta$-invariant, i.e., $L_U(B,\tau,\omega)=L_U(B,\Theta_t(\tau,\omega))$, for all $t\in \mathbb{R}$.
    This $\Theta$-invariance may not hold for forward omega-limit set of $B\subset X$ defined by 
\begin{equation*}
        L^+(B,\tau,\omega)
        =\bigcap_{r\geq 0}\overline{\bigcup_{t\geq r}\varphi(t,\tau,\omega)B
        }, \ (\tau,\omega)\in\mathbb{R}\times \Omega.
    \end{equation*}



The main result of this section assumes usual uniform asymptotically compactness, see e.g. \cite[Chapter IV, p. 84]{Chepyzhov-Vishik-book}. 

\begin{theorem}[Existence of minimal joint uniform attractor]\label{th-existence-minimal-uniform-attractor}
    Let $\varphi$ be a NRDS. Assume that there exists a family  $\{K(\tau,\omega):(\tau,\omega)\in \mathbb{R}\times \Omega\}$ of compact subsets of $X$
    such that for each bounded subset $B\subset X$ we have
\begin{equation}\label{eq-hypotheses-unifom-AK.}
		\lim_{t\to +\infty}\sup_{s\in \mathbb{R}}\  dist(\varphi(t,\Theta_s(\tau,\omega))B,K(\tau,\omega))=0.
		\end{equation}
        Then, there exists a minimal uniform attractor for $\varphi$, given by
        \begin{equation}\label{eq-th-existence-URA}
\mathcal{A}_U(\tau,\omega)=\overline{\bigcup_{B\in\mathcal{B}} L_U (B,\tau,\omega))},
        \end{equation}
        where $\mathcal{B}$ is the class of all bounded non-empty subsets of $X$.
\end{theorem}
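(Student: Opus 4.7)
The plan is to follow the classical Chepyzhov--Vishik strategy, adapted to the random nonautonomous context, and verify the four defining properties of the MJUA in turn.

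First, I would show that for every bounded $B\subset X$ the uniform omega-limit set $L_U(B,\tau,\omega)$ is a non-empty compact subset of $K(\tau,\omega)$. Indeed, given sequences $t_n\to\infty$, $s_n\in\mathbb{R}$, and $x_n\in B$, the uniform attraction hypothesis \eqref{eq-hypotheses-unifom-AK.} forces $\varphi(t_n,\Theta_{s_n}(\tau,\omega))x_n$ to become arbitrarily close to the compact set $K(\tau,\omega)$, hence a subsequence converges to some point of $K(\tau,\omega)$, which then lies in $L_U(B,\tau,\omega)$. Because each $L_U(B,\tau,\omega)$ sits inside the fixed compact $K(\tau,\omega)$, the closure $\mathcal{A}_U(\tau,\omega)$ in \eqref{eq-th-existence-URA} is also a compact subset of $K(\tau,\omega)$. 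The $\Theta$-invariance of $\mathcal{A}_U$ is inherited from the $\Theta$-invariance of $L_U(B,\cdot)$ already observed after its definition, passing through the union and the closure.

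For the uniform attraction property~(3), I would argue by contradiction: if it failed for some bounded $B$, there would exist $\delta>0$ and sequences $t_n\to\infty$, $s_n\in\mathbb{R}$, $x_n\in B$ with $d(\varphi(t_n,\Theta_{s_n}(\tau,\omega))x_n,\mathcal{A}_U(\tau,\omega))\geq\delta$. Extracting a subsequence using \eqref{eq-hypotheses-unifom-AK.} and the compactness of $K(\tau,\omega)$, the limit would automatically belong to $L_U(B,\tau,\omega)\subset\mathcal{A}_U(\tau,\omega)$, a contradiction. Minimality~(4) is then direct: any closed family $\widetilde{\mathcal{A}}(\tau,\omega)$ satisfying~(3) must contain every $L_U(B,\tau,\omega)$, since each point of $L_U(B,\tau,\omega)$ arises as a limit of a sequence $\varphi(t_n,\Theta_{s_n}(\tau,\omega))x_n$ forced by~(3) into the closed set $\widetilde{\mathcal{A}}(\tau,\omega)$; taking closures of unions gives $\mathcal{A}_U(\tau,\omega)\subseteq\widetilde{\mathcal{A}}(\tau,\omega)$.

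The main obstacle I expect is the measurability property~(2), i.e.\ that $\omega\mapsto\mathcal{A}_U(\tau,\omega)$ is a random set. The formula \eqref{eq-th-existence-URA} involves an uncountable union over bounded $B\subset X$, an uncountable supremum over $s\in\mathbb{R}$ in the underlying $L_U$, and a continuum of times $t\geq r$, so some reduction is required before standard random-set arguments apply. The plan is to replace the union over $\mathcal{B}$ by a countable union over closed balls of integer radius centered at the points of a fixed countable dense subset of $X$, and to exploit joint continuity of $(t,x)\mapsto\varphi(t,\tau,\omega)x$ to reduce $t$ and $s$ to rational parameters. Once $\mathcal{A}_U(\tau,\omega)$ is written as a countable closure of countable unions of images of $\varphi$ evaluated at measurable arguments, the measurability of the cocycle $\varphi$ and of the shift $\theta$ combined with the projection/selection results for measurable multifunctions in \cite[Chapter III]{CastaingValadier} yield measurability of $\omega\mapsto d(x,\mathcal{A}_U(\tau,\omega))$ for every fixed $x\in X$, which is the required random-set condition.
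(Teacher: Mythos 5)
Your treatment of the dynamical properties (non-emptiness and compactness of $L_U(B,\tau,\omega)$ inside $K(\tau,\omega)$, the contradiction argument for uniform attraction, minimality, and $\Theta$-invariance) matches what the paper asserts as straightforward, and is fine. The problem is in property (2), exactly where you anticipated difficulty: your plan to ``exploit joint continuity of $(t,x)\mapsto\varphi(t,\tau,\omega)x$ to reduce $t$ and $s$ to rational parameters'' does not work for the parameter $s$. In the uniform omega-limit the variable $s$ enters through $\Theta_s(\tau,\omega)=(s+\tau,\theta_s\omega)$, i.e.\ through the initial-time slot and through the noise argument $\theta_s\omega$, and the NRDS is only assumed \emph{measurable} in those arguments; continuity is assumed only in $(t,x)$ for each fixed $(\tau,\omega)$. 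Hence $s\mapsto\varphi(t,s+\tau,\theta_s\omega)x$ need not be continuous, the closure of the union over rational $s$ need not equal the closure of the union over all $s\in\mathbb{R}$, and the proposed countable rewriting of $L_U(B,\tau,\omega)$ fails. (Reducing $t\geq r$ to rational $t$ for fixed $s$, the union over $\mathcal{B}$ to countably many balls, and the intersection to rational $r>0$ are all fine.)

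The paper's proof keeps the uncountable union over $(t,s)\in[r,\infty)\times\mathbb{R}$ and handles it with the measurable projection theorem \cite[Theorem III.23]{CastaingValadier}: for fixed $x$ and $a$, the sublevel set $\{\omega:\inf_{t\geq r,\,s\in\mathbb{R}} d(x,\varphi(t,s+\tau,\theta_s\omega)B)<a\}$ is written as the projection onto $\Omega$ of a measurable subset of $\mathbb{R}^2\times\Omega$, which gives measurability of $\omega\mapsto d\bigl(x,\bigcup_{t\geq r}\bigcup_{s\in\mathbb{R}}\varphi(t,s+\tau,\theta_s\omega)B\bigr)$ with respect to the $\mathbb{P}$-completion of $\mathcal{F}$; one then intersects over rational $r$ and takes a countable union of balls. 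You do cite the Castaing--Valadier machinery, but only as a final ingredient after the (unjustified) countable reduction; note also that if that reduction were available, no projection theorem would be needed at all, whereas the genuine argument needs it precisely because the $s$-union cannot be made countable, and the price is that measurability is obtained only on the completed $\sigma$-algebra. To repair your proposal, drop the rational-$s$ reduction and run the projection argument on the set $\{(t,s,\omega): d(x,\varphi(t,s+\tau,\theta_s\omega)B)<a,\ t\geq r\}$ directly.
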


\begin{proof}
    For each bounded subset $B$ of $X$, it is straightforward to prove that, for each $\tau,\omega\in \mathbb{R}\times \Omega$ fixed,
    $L(B,\omega)$ is a non-empty, compact set and  is the minimal closed set such that 
\begin{equation}
		\lim_{t\to +\infty}\sup_{s\in \mathbb{R}}
        dist(\varphi(t,\tau+s,\theta_s\omega)B,L_U(B,\omega))=0.
		\end{equation}
        Therefore, the family $\{\mathcal{A}_U(\tau,\omega): (\tau,\omega)\in \mathbb{R}\times \Omega\}$ defined by \eqref{eq-th-existence-URA} defines a family of compact sets which satisfies all the conditions to be a uniform attractor for $(\varphi,\Theta)$, with exception of the measurability, so let us show that
        $\omega\mapsto A_U(\tau,\omega)$ is a random set for every $\tau\in \mathbb{R}$.
        
        Let us show first that
        $\omega\mapsto L_U(B,\omega)$ is a random set, where $B$ is an open ball, we follow closely the ideas of \cite[Theorem 3.11]{Crauel-Flandoli-94}. 
        The first step is to prove that 
        $\omega\mapsto \varphi(t,s+\tau,\theta_s\omega))B$ is a random set, for every $t\geq 0$, and $s,\tau\in\mathbb{R}$.
        Second, let $x\in X$, note that
        \begin{equation*}
            d(x,\bigcup_{t\geq r}\bigcup_{s\in\mathbb{R}}\varphi(t,s+\tau,\theta_s\omega)B)=
            \inf\{d(x,\varphi(t,s+\tau,\theta_s\omega)B): t\geq r,\ s\in\mathbb{R}\}.
        \end{equation*}
        Now, given $a\in \mathbb{R}$, we have
        \begin{align*}
            \{\omega:\, \inf\{d(x,\varphi(t,s+\tau,\theta_s\omega)B):\, t\geq r,\ s\in\mathbb{R}\}<a\}=\\
            \pi_\Omega\{(t,s,\omega): \, d(x,\varphi(t,s+\tau,\theta_s\omega)B)<a, \,t\geq r, \, s\in \mathbb{R}\},
        \end{align*}
        where $\pi_\Omega:\mathbb{R}^2\times \Omega\to \Omega$ is the canonical projection onto $\Omega$. Hence, by the projection theorem \cite[Theorem III.23]{CastaingValadier}, 
        the mapping $\omega\mapsto d(x,\bigcup_{t\geq r}\bigcup_{s\in\mathbb{R}}\varphi(t,s+\tau,\theta_s\omega)B)$ is measurable with respect to the $\mathbb{P}$-completion of $\mathcal{F}$, and the proof of the assertion is complete by taking the countable intersection ($r\in \mathbb{Q}$ with $r>0$).
        Finally, to see that $\omega\mapsto \mathcal{A}_U(\tau,\omega)$ is a random set, take a countable union of balls in \eqref{eq-th-existence-URA}.
\end{proof}

    We will apply Theorem \ref{th-existence-minimal-uniform-attractor} in the examples of the next sections.
    

    \medskip
     If, instead of Condition \eqref{eq-hypotheses-unifom-AK.}, we assume that there is a deterministic compact set $K$ that 
\begin{equation}\label{eq-stronger-unifom-AK}
		\lim_{t\to +\infty}\sup_{(\tau,\omega)\in \mathbb{R}\times \Omega}\  dist(\varphi(t,\tau,\omega)B,K)=0,
		\end{equation}
        then by defining the following uniform omega-limit set
        \begin{equation*}
        L_{\mathbb{R}\times \Omega}(B)
        =\bigcap_{r\geq 0}\overline{\bigcup_{t\geq r}\bigcup_{(\tau,\omega)\in \mathbb{R}\times \Omega}\varphi(t,\tau,\omega)B
        }, \ \omega\in\mathbb{R}\times \Omega,
    \end{equation*}
    it is possible to obtain a uniform attractor $\mathcal{A}_{\mathbb{R}\times \Omega}$ with respect to $\mathbb{R}\times \Omega$, for details see \cite[Chapter VII, Thm 1.1]{Chepyzhov-Vishik-book}. 
    It is straightforward to verify that
    $\mathcal{A}_U(\tau,\omega)\subset \mathcal{A}_{\mathbb{R}\times \Omega}$, for all $(\tau,\omega)$,
with proper inclusion in certain cases (see Example \ref{Ex: paulo} ).

\begin{example} \label{Ex-MJUA for cubic}\emph{
  Let $(\Omega,\mathcal{F},\mathbb{P},\theta)$ be the canonical Wiener two-sided metric dynamical system, 
  and $a:\Omega\to (0,+\infty)$ a random variable. Consider the following scalar differential equation
\[
\dot{x}= x - \frac{x^3}{a(\omega)^2}+|g(t)z(\theta_t \omega)| x^3, \  t\geq \tau,
\]
 where $z(\theta_t \omega)= z_t(\omega)$ is the Ornstein-Uhlenbeck process and $g:\mathbb{R}\to \mathbb{R}$ such that $g(t)z(\theta_t\omega)\to 0$ as $t\to \pm \infty$, see e.g. \cite[Subsection 7.1]{Caraballo-Carvalho-Oliveira-Sousa-NRA} for examples of such functions. 
 Then, the MJUA is given by 
 \begin{equation*}
     \mathcal{A}_U(\tau,\omega)=[-a(\theta_{-\tau}\omega),a(\theta_{-\tau}\omega)], \ \hbox{ for all }(\tau,\omega)\in \mathbb{R}\times \Omega.
 \end{equation*}
 If $a$ is a bounded random variable, the uniform attractor defined by Cui and Langa \cite{Cui-Langa-17} will coincide with the uniform attractor of Chepyzhov and Vishikv \cite{Chepyzhov-Vishik-book}, given by $[-\sup a,\sup a]$. On the other hand, if $a$ is unbounded, there is no uniform attractor in the sense of \cite{Chepyzhov-Vishik-book,Cui-Langa-17}. 
}
\end{example}

Like any other definition of attractors, not every NRDS has a MJUA. Consider for example the Orstein-Uhlenbeck equation
\begin{equation}\label{eq-OU-equation}
    dz_t = -z\ dt + dW_t,
\end{equation}
which has a pullback attractor given by 
\[
z(\omega) = - \int_{-\infty}^0 e^t \omega (t)\ dt.
\]
Nevertheless, it does not have a uniform random attractor since $z(\omega)$ is Gaussian. In the next sections we establish conditions for existence of MJUA in some particular cases.

\section{Stability of Minimal Joint Uniform Attractors via Compactification of symbol space
}
\label{sec-Stability-of-MJUA}
This section presents an alternative approach to nonautonomous random differential equations by examining the hull of nonlinearities for each 
$\omega\in \Omega$ fixed.
This method allows us to use the stability of the global attractor within the skew product semi-flow to prove the stability of the minimal joint uniform attractor for the nonautonomous random dynamical systems associated. 

As before, let $X$ be a separable Banach space, and $(\Omega,\mathcal{F},\mathbb{P},\theta_t)$ be a metric dynamical system.
Consider the following differential equation
\begin{equation}\label{eq-NRDE-with-h}
\dot{v}=h(\beta(t,\theta_t\omega),v), \ t> \tau, \ v(\tau)=x_0\in X,
\end{equation}
where $h:\mathbb{R}\times X\to X$ is continuous and Lipschitz in the second variable uniformly in bounded subsets of $X$, and $\beta:\mathbb{R}\times \Omega\to \mathbb{R}$ with conditions specified below.
Throughout this section, we assume that for each $(\tau,\omega)\in \mathbb{R}\times \Omega$ and $x_0\in X$ there exists a unique solution $v(t,\tau,\omega;x_0)$, defined for all $t\geq \tau$.
As in Proposition \ref{remark-generation-NRDS}, by considering $u(t, \tau,\omega,x_0)=v(t+\tau, \tau,\omega,x_0)$, we obtain the problem 
\begin{equation}
\dot{u}=h(\beta(t+\tau,\theta_{t+\tau}\omega)),u), \ t> 0, \ u(0)=x_0\in X.
\end{equation}
Hence, if $\varphi(t,\tau,\omega)u_0$
is the solution of 
\begin{equation}
\dot{u}=h(\beta(t+\tau,\theta_{t}\omega),u), \ t> 0, \ u(0)=x_0\in X,
\end{equation}
 the solutions are associated with a NRDS 
$(\varphi,\Theta)$ given by
\begin{equation}\label{eq-generatiton-NRDS}
\varphi(t,\tau,\omega)x_0:=v(t+\tau,\tau,\theta_{-\tau}\omega;x_0), \ \forall \, t\geq 0.
\end{equation}

Our current goal is to study the asymptotic behaviour of 
\eqref{eq-generatiton-NRDS} with the standard theory of deterministic nonautonomous dynamical systems. Specifically, we will employ the theoretical framework provided in \cite[Chapter IV]{Chepyzhov-Vishik-book} to investigate the skew product semiflow associated with the hull of the mapping $$t\mapsto \beta(t+\tau,\theta_t\omega), \hbox{for each fixed } (\tau,\omega)\in \mathbb{R}\times \Omega.$$

More precisely, let $C(\mathbb{R})$ be the set of all real-valued continuous functions and define $\vartheta_tf(\cdot)=f(t+\cdot)$ the shift map in $C(\mathbb{R})$ by $t\in \mathbb{R}$. 
In $C(\mathbb{R})$ we consider $\rho$ the compact open topology, then the hull of $t\mapsto \beta(t+\tau,\theta_t\omega)$ is given by
\begin{equation}\label{eq-nonautonomous-symbol-space}
\Sigma(\tau,\omega):=\overline{\{\vartheta_t\beta(\tau+ \cdot\,,\theta_{(\cdot)} \omega): t\in\mathbb{R} \}}^\rho.
\end{equation}

    This jointly dependence, implies that $\Sigma(\tau,\omega)$ is $\Theta$-invariant in the following sense: $\Sigma(\Theta_t(\tau,\omega))= \Sigma(\tau,\omega)$ for all $t\in \mathbb{R}$. Again, similarly to \eqref{eq-remark-joint-dependence-MJUA}, when   parameter $\tau$ is omitted we mean that $\tau=0$, i.e.  $ \Sigma (\omega):= \Sigma (0,\omega)$.

\begin{remark}
\emph{
    For deterministic non-autonomous differential equations, such as 
    \begin{equation}
        \dot{u}=f(g(t),u),  \ t\geq \tau, \ v(\tau)=x_0,
    \end{equation}
    the hull of $g$, $\overline{\{\vartheta_tg(\,\cdot\,): t\in\mathbb{R} \}}^\rho$ is independent of initial time $
    \tau$. 
   However, for nonautonomous random context, the hull of $t\mapsto \beta(t,\theta_t\omega)$, inherently carries information about both the initial time $\tau$ and the realization $\omega$. 
   }
\end{remark}
We assume the following assumption on $\beta$ to ensure the compactness of its hull, which plays a crucial role in establishing existence of attractors:

\bigskip

\noindent \textbf{Condition $(H_1)$:} Assume that for each $\omega\in \Omega$
the mapping $t\mapsto \beta(t,\theta_t\omega)$ is bounded and H\"older continuous on compact intervals of $\mathbb{R}$, i.e., there exist $0<\alpha<1/2$ such that for each $M>0$ there exists $\ell=\ell(\omega,M)>0$ such that
\begin{equation}
|\beta(t,\theta_t\omega)- \beta(s,\theta_s\omega)|\leq \ell |t-s|^\alpha, \ \forall \ t,s\in [-M,M].
\end{equation}

This hypothesis is natural in stochastic analysis: It holds for any continuous semimartingale generated by Brownian motion thanks to the well-known Kolmogorov Criterion, see e.g Revuz and Yor \cite{Revuz-Yor}.

\begin{lemma}\label{lemma-sigma(omega)-compactenss}
    Assume that $\beta$ satisfies Condition \textbf{$(H_1)$}, then $\Sigma(\omega)$,
	the hull of $t\mapsto \beta(t,\theta_t\omega)$,
    is a compact set in $C(\mathbb{R})$ for each $\omega\in \Omega$. 
\end{lemma}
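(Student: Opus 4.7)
The plan is to reduce compactness in the Fréchet space $(C(\mathbb{R}),\rho)$ to the classical Arzelà–Ascoli theorem on each compact interval together with a standard diagonal extraction. The compact--open topology on $C(\mathbb{R})$ is metrizable, for instance by
\[
\rho(f,g)=\sum_{n\in\mathbb{N}}2^{-n}\,\frac{\|f-g\|_{C([-n,n])}}{1+\|f-g\|_{C([-n,n])}},
\]
and a set $\mathcal{S}\subset C(\mathbb{R})$ is relatively $\rho$-compact if and only if, for every $M>0$, the restriction $\{f|_{[-M,M]}:f\in\mathcal{S}\}$ is relatively compact in $C([-M,M])$ under the uniform norm. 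Fixing $\omega\in\Omega$ and writing $f_\omega(t):=\beta(t,\theta_t\omega)$, it therefore suffices to show that the orbit
\[
\mathcal{O}(\omega):=\{\vartheta_t f_\omega:t\in\mathbb{R}\}
\]
has a relatively compact restriction to each $[-M,M]$, since the hull $\Sigma(\omega)$ is just its $\rho$-closure.

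Fix $M>0$. Uniform boundedness on $[-M,M]$ is immediate from the boundedness hypothesis in $(H_1)$: $\sup_{t\in\mathbb{R}}\|\vartheta_t f_\omega\|_{C([-M,M])}\leq \sup_{s\in\mathbb{R}}|f_\omega(s)|<\infty$. For equicontinuity, take $r,s\in[-M,M]$ and $t\in\mathbb{R}$; then $t+r,t+s\in[t-M,t+M]$, and the Hölder estimate from $(H_1)$ yields
\[
|\vartheta_t f_\omega(r)-\vartheta_t f_\omega(s)|=|f_\omega(t+r)-f_\omega(t+s)|\leq \ell(\omega,M)\,|r-s|^\alpha,
\]
so the family $\{\vartheta_t f_\omega|_{[-M,M]}:t\in\mathbb{R}\}$ is equicontinuous. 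Arzelà–Ascoli then gives that its closure in $C([-M,M])$ is compact.

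To assemble the $M$-by-$M$ compactness into relative $\rho$-compactness of $\mathcal{O}(\omega)$, I would run a diagonal argument: given a sequence $(t_n)\subset\mathbb{R}$, extract successive subsequences $(t_n^{(k)})$ such that $\vartheta_{t_n^{(k)}}f_\omega$ converges uniformly on $[-k,k]$, and then pass to the diagonal $(t_n^{(n)})$. The resulting subsequence converges in $\rho$ to some limit $\xi\in C(\mathbb{R})$, showing $\mathcal{O}(\omega)$ is relatively compact. Its closure $\Sigma(\omega)$ is therefore compact in $(C(\mathbb{R}),\rho)$, as claimed.

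The delicate point is the equicontinuity step: the Hölder bound in $(H_1)$ is stated on the fixed window $[-M,M]$, whereas the shifts $\vartheta_t f_\omega|_{[-M,M]}$ demand uniform-in-$t$ Hölder control on every translate $[t-M,t+M]$. One must therefore read $(H_1)$ as supplying a constant $\ell(\omega,M)$ valid on \emph{all} compact intervals of length $2M$ (the natural interpretation for processes of the form $t\mapsto \beta(t,\theta_t\omega)$ obtained from $\theta$-stationary noise via the Kolmogorov criterion), or else upgrade it to such a uniform version \emph{en route}. Once this uniform Hölder constant is in hand, the rest of the argument is routine Arzelà–Ascoli plus diagonalization.
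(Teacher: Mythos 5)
Your argument is the same one the paper uses, only written out in full: the paper's proof consists of citing Corollary 2.1 of Chapter V of Chepyzhov--Vishik, which is exactly the reduction of compactness of the hull in the compact--open topology to Arzel\`a--Ascoli on bounded intervals that you perform by hand (metrizability of $\rho$, uniform boundedness, equicontinuity of the translates, diagonal extraction). Within that scheme your steps are correct, and your write-up is more explicit than the paper's.

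The ``delicate point'' you flag is genuine, and it is not resolved in the paper either. As literally stated, $(H_1)$ attaches the H\"older constant $\ell(\omega,M)$ to the fixed interval $[-M,M]$; this is only a local H\"older condition whose constant may degrade as the window moves off to infinity, and it does not yield equicontinuity of $\{\vartheta_t f_\omega|_{[-M,M]}:t\in\mathbb{R}\}$ uniformly in $t$, which is precisely what your Arzel\`a--Ascoli step requires. A bounded smooth function such as $t\mapsto\sin(t^2)$ satisfies the literal $(H_1)$, yet its hull is not compact: the restrictions of far translates to $[-1,1]$ oscillate ever faster, so they form a non-equicontinuous family with no uniformly convergent subsequence. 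The Chepyzhov--Vishik translation-compactness criterion invoked by the paper requires boundedness together with uniform continuity on all of $\mathbb{R}$ (equivalently, a modulus of continuity valid on every translated window with constant independent of the center), so the paper's one-line proof tacitly assumes the same strengthening of $(H_1)$ that you ask for. Your proof is therefore as complete as the paper's, provided $(H_1)$ is read with $\ell=\ell(\omega,M)$ uniform over all intervals of length $2M$; do note that this uniform reading is not automatic from the Kolmogorov criterion alone (for Brownian-type noise the H\"older constants on unit windows are typically unbounded in $t$ unless, as in the paper's examples, the time-dependent coefficient forces $\beta(t,\theta_t\omega)$ to decay), so it is really an additional hypothesis rather than a reinterpretation.
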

\begin{proof} 
 It follows directly from Corollary 2.1 of \cite[Chapter V]{Chepyzhov-Vishik-book}, where compactness of sets of functions with unbounded intervals is constructed via compactness in bounded intervals (Arzelá-Ascoli theorem). 
\end{proof}

 An alternative way to obtain compactness of $\Sigma(\omega)$ is assuming that  $\beta(t,\theta_t\omega)\to \beta_0$, as $t\to \pm \infty$ for all $\omega\in \Omega$, then 
\begin{equation}
\Sigma(\omega)=\{\vartheta_t\beta( \cdot\,,\theta_{(\cdot)} \omega): t\in\mathbb{R} \}\cup \{\beta_0^*\},
\end{equation}
where $\beta_0^*$ is the constant function equal to $\beta_0$. See \cite[Chapter 8, Section 4]{Bortolan-Carvalho-Langa-book} for more details on asymptotically autonomous problems.

	
    
\bigskip

Now, we explain how to associate Problem \eqref{eq-NRDE-with-h} with a skew-product semiflow. For each $\sigma\in \Sigma(\omega)$, we denote 
the solution of 
\begin{equation}\label{eq-semilinear-with-sigma}
\dot{u}=h(\sigma(t),u), \ t>0, \ u(0)=x_0\in X,
\end{equation}
 by $ \psi(t,\sigma)x_0:= u(t,\sigma)x_0$ for all $t\geq 0$.
Then, the mapping
\begin{equation}\label{eq-NDS-psi}
[0,+\infty)\times \Sigma(\omega) \ni (t,\sigma)\mapsto \psi(t,\sigma)\in C(X).
\end{equation}
is continuous and satisfies
\begin{enumerate}
	\item $\psi(0,\sigma)=Id_X$;
	\item $\psi(t+s,\sigma)=\psi(t,\vartheta_s\sigma)\psi(s,\sigma)$, $t+s,s\geq 0$;
\end{enumerate}
in other words $(\psi,\vartheta)$ is a non-autonomous dynamical system (NDS). Finally, 
let $\Pi(t):X\times \Sigma(\omega)\to X\times \Sigma(\omega)$ be the skew product semi-flow associate to $(\psi,\vartheta)$, i.e., for every $t\geq 0$, 
\begin{equation}
\Pi(t)(x,\sigma)=(\psi(t,\sigma)x,\vartheta_t \sigma), \ (x,\sigma)\in X\times \Sigma(\omega).
\end{equation}

\medskip
Now we establish a relation between the dynamical systems described above.
\begin{lemma} \label{Lemma: NDS and NRDS}
    Let $(\psi,\vartheta)$ be the NDS defined in \eqref{eq-NDS-psi} and $(\varphi,\Theta)$ the NRDS defined in \eqref{eq-generatiton-NRDS}.
    If  $\sigma:=\sigma_\omega=\vartheta_{\tau}\beta(\,\cdot\,,\theta_{(\cdot)}\omega)$ for some $\tau=\tau_\sigma\in \mathbb{R}$, 
   then
\begin{equation}\label{eq-relation-between-varphi-psi}
\psi(t,\sigma_\omega)=\varphi(t,\tau_\sigma,\theta_{\tau_\sigma}\omega),  
\forall \, t\in \mathbb{R}.
\end{equation}
\end{lemma}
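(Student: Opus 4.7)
The plan is to prove the identity pointwise on an arbitrary initial datum $x_0\in X$ by showing that, when written out via the defining ODEs, the left and right hand sides solve the same Cauchy problem, and then to invoke uniqueness of solutions. Since both $\psi$ and $\varphi$ are defined only for nonnegative time, I read the conclusion as asserting equality for all $t\geq 0$.

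First, I would unpack the right hand side. By the definition \eqref{eq-generatiton-NRDS},
\begin{equation*}
\varphi(t,\tau_\sigma,\theta_{\tau_\sigma}\omega)x_0 \;=\; v\bigl(t+\tau_\sigma,\tau_\sigma,\theta_{-\tau_\sigma}\theta_{\tau_\sigma}\omega;x_0\bigr) \;=\; v(t+\tau_\sigma,\tau_\sigma,\omega;x_0),
\end{equation*}
so I introduce $u(t):=v(t+\tau_\sigma,\tau_\sigma,\omega;x_0)$ and aim to show $u(t)=\psi(t,\sigma_\omega)x_0$ for all $t\geq 0$.

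Next, I would verify that $u$ satisfies the Cauchy problem defining $\psi(\cdot,\sigma_\omega)x_0$. Clearly $u(0)=v(\tau_\sigma,\tau_\sigma,\omega;x_0)=x_0$. Since $v(\cdot,\tau_\sigma,\omega;x_0)$ solves \eqref{eq-NRDE-with-h}, the chain rule yields, for $t>0$,
\begin{equation*}
\dot u(t)=\dot v(t+\tau_\sigma,\tau_\sigma,\omega;x_0)=h\bigl(\beta(t+\tau_\sigma,\theta_{t+\tau_\sigma}\omega),\,u(t)\bigr).
\end{equation*}
By the hypothesis on $\sigma_\omega$, namely $\sigma_\omega(t)=\vartheta_{\tau_\sigma}\beta(\,\cdot\,,\theta_{(\cdot)}\omega)(t)=\beta(t+\tau_\sigma,\theta_{t+\tau_\sigma}\omega)$, this is precisely $\dot u(t)=h(\sigma_\omega(t),u(t))$. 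Hence $u$ solves \eqref{eq-semilinear-with-sigma} for the symbol $\sigma_\omega$ with initial datum $x_0$.

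Finally, I would appeal to the standing assumption that $h$ is continuous and locally Lipschitz in its second variable uniformly on bounded sets, which gives uniqueness of solutions of \eqref{eq-semilinear-with-sigma}. Uniqueness then forces $u(t)=\psi(t,\sigma_\omega)x_0$ for every $t\geq 0$, and since $x_0\in X$ was arbitrary, the desired identity \eqref{eq-relation-between-varphi-psi} follows. There is no substantial obstacle here: the statement is essentially a bookkeeping check that the two different ways of shifting the time variable (via the cocycle structure of $\varphi$ versus via the shift $\vartheta$ of symbols) produce the same trajectory. The only point requiring care is that the cancellation $\theta_{-\tau_\sigma}\theta_{\tau_\sigma}=\mathrm{id}_\Omega$ is used to pass from $\varphi(t,\tau_\sigma,\theta_{\tau_\sigma}\omega)$ to a solution of \eqref{eq-NRDE-with-h} driven directly by $\omega$.
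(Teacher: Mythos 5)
Your argument is correct and is essentially the paper's own (much terser) proof: identify $\sigma_\omega(t)=\beta(t+\tau_\sigma,\theta_{t+\tau_\sigma}\omega)$, observe via \eqref{eq-generatiton-NRDS} and the cancellation $\theta_{-\tau_\sigma}\theta_{\tau_\sigma}=\mathrm{id}_\Omega$ that $\varphi(t,\tau_\sigma,\theta_{\tau_\sigma}\omega)x_0=v(t+\tau_\sigma,\tau_\sigma,\omega;x_0)$ solves the Cauchy problem \eqref{eq-semilinear-with-sigma} for the symbol $\sigma_\omega$, and conclude by uniqueness of solutions. Your reading of the conclusion as holding for $t\geq 0$ (since both cocycles are defined only for nonnegative time) is the intended one.
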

\begin{proof}
    For $\sigma:=\sigma_\omega\in \{\beta(t+\cdot,\theta_{t+(\cdot)} \omega): t\in\mathbb{R} \}$, there exists 
$\tau_\sigma\in \mathbb{R}$ such that 
$\sigma(t)=\beta(\tau_\sigma+t,\theta_{\tau_\sigma+t}\omega)$ for every $t\in \mathbb{R}$. Then, the associate equation 
\begin{equation}
\dot{u}=h(\beta(t+\tau_\sigma,\theta_{t+\tau_\sigma}\omega),v), \ t> 0, \ u(0)=x_0,
\end{equation}
with solution given by 
$u(t+\tau_\sigma,\tau_\sigma,\omega,u_0)$. Hence, \eqref{eq-relation-between-varphi-psi} follows from \eqref{eq-generatiton-NRDS}.

\end{proof}

\medskip

\begin{lemma}\label{lemma-equivalence-A-K-between-dynamics}
    Let $(\psi,\vartheta)$ be the NDS defined in \eqref{eq-NDS-psi} and $(\varphi,\Theta)$ the NRDS defined in \eqref{eq-generatiton-NRDS}. Assume that $\Sigma(\omega)$ is compact for every $\omega\in \Omega$, then the following statements are equivalent:
    \begin{enumerate}
         \item There is a $\mathbb{K}$ compact set in $X\times \Sigma(\omega)$ that attracts bounded subsets under the action of $\Pi(\cdot)$. 
        \item There is a compact subset $K(\omega)\subset X$, such that for all bounded subset $B\subset X$, we have 
        \begin{equation*}
            \lim_{t\to +\infty}\sup_{\sigma\in \Sigma(\omega)} dist(\psi(t,\sigma)B,K(\omega))=0.
        \end{equation*}
        \item There is a family of compact sets $\{K(\omega):\omega\in \Omega\}$ of $X$ that for all bounded subset $B \subset X$, we have
        \begin{equation}\label{eq-uniformly-attraction}
            \lim_{t\to +\infty}\sup_{s\in \mathbb{R}} dist(\varphi(t,s,\theta_s\omega)B,K(\omega))=0.
        \end{equation}
    \end{enumerate}
\end{lemma}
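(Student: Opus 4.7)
The plan is to organize the proof into two equivalence blocks: (1)$\Leftrightarrow$(2), which extracts the uniform-in-$\sigma$ attraction in $X$ from a compact attracting set in the extended phase space, and (2)$\Leftrightarrow$(3), which converts between symbol parametrization and shift parametrization via Lemma \ref{Lemma: NDS and NRDS}. Throughout, I fix $\omega\in\Omega$ and use that by Lemma \ref{lemma-sigma(omega)-compactenss} the symbol space $\Sigma(\omega)$ is compact, so every bounded subset of $X\times\Sigma(\omega)$ sits inside some product $B\times\Sigma(\omega)$ with $B\subset X$ bounded, and moreover $\Sigma(\omega)$ is $\vartheta_t$-invariant (being the closure of a $\vartheta_t$-orbit).

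For (2)$\Rightarrow$(1) I would take $\mathbb{K}:=K(\omega)\times\Sigma(\omega)$, which is compact, and exploit $\vartheta_t$-invariance of $\Sigma(\omega)$ to bound the Hausdorff semi-distance $\mathrm{dist}(\Pi(t)(B\times\Sigma(\omega)),\mathbb{K})$ by $\sup_{\sigma\in\Sigma(\omega)}\mathrm{dist}(\psi(t,\sigma)B,K(\omega))$, which tends to zero by hypothesis. For (1)$\Rightarrow$(2), I would set $K(\omega):=\pi_X(\mathbb{K})$, where $\pi_X$ is the first-coordinate projection, and use that this $K(\omega)$ is compact and that the $X$-component of the semi-distance in $X\times\Sigma(\omega)$ dominates $\mathrm{dist}(\psi(t,\sigma)B,K(\omega))$. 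The implication (2)$\Rightarrow$(3) is then direct: by Lemma \ref{Lemma: NDS and NRDS}, $\psi(t,\sigma_s)=\varphi(t,s,\theta_s\omega)$ for $\sigma_s:=\vartheta_s\beta(\,\cdot\,,\theta_{(\cdot)}\omega)\in\Sigma(\omega)$, so restricting the supremum over $\Sigma(\omega)$ to the subfamily $\{\sigma_s\}_{s\in\mathbb{R}}$ yields (3).

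The delicate step, which I expect to be the main obstacle, is (3)$\Rightarrow$(2): one must upgrade a uniform bound over the shift-orbit $\{\sigma_s : s\in\mathbb{R}\}$ to one over the whole closure $\Sigma(\omega)$. Given $\sigma\in\Sigma(\omega)$, pick $s_n\in\mathbb{R}$ with $\sigma_{s_n}\to\sigma$ in the compact-open topology. For each fixed $t\geq 0$ and $x\in B$, continuity of $\psi(t,\cdot)$ on $\Sigma(\omega)$ and of $d(\cdot,K(\omega))$ allows passing the inequality
\[
d(\psi(t,\sigma_{s_n})x,K(\omega))\;\leq\;\sup_{s\in\mathbb{R}}\mathrm{dist}(\varphi(t,s,\theta_s\omega)B,K(\omega))
\]
to the limit. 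Only pointwise convergence in $\sigma$ is available, but the uniformity of the right-hand bound in $n$, together with continuity of the distance to a fixed compact set, makes this passage legitimate without any uniform continuity of $\psi(t,\cdot)$ in $\sigma$; taking suprema in $x\in B$ and then in $\sigma\in\Sigma(\omega)$ delivers (2). This reduces the crux of (3)$\Rightarrow$(2) to the structural continuity already packaged in the NDS $(\psi,\vartheta)$.
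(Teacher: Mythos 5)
Your proposal is correct and follows essentially the same route as the paper: (1)$\Leftrightarrow$(2) via the projection $\pi_X(\mathbb{K})$ and the product $K(\omega)\times\Sigma(\omega)$ (the paper delegates this to \cite[Lemma 16.5]{Carvalho-Langa-Robison-book}), and (2)$\Leftrightarrow$(3) via the identification of Lemma \ref{Lemma: NDS and NRDS}. The only difference is that you spell out the density-plus-continuity argument needed to pass from uniformity over the shift orbit to uniformity over its closure $\Sigma(\omega)$ in (3)$\Rightarrow$(2); the paper leaves this implicit here but uses exactly the same argument in the proof of Proposition \ref{proposition-equality-minimal-attractors}, so your added detail is consistent with, and a welcome completion of, the paper's reasoning.
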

\begin{proof}
    The equivalence between (1) and (2)
    is obtained by taking projection of $\mathbb{K}$ onto $X$, and reciprocally by defining $\mathbb{K}= K(\omega)\times \Sigma(\omega)$, see \cite[Lemma 16.5]{Carvalho-Langa-Robison-book} for details.
    The equivalence between (2) and (3) is a direct consequence of Lemma \eqref{Lemma: NDS and NRDS}.

\end{proof}

Lemma \ref{lemma-equivalence-A-K-between-dynamics} justifies that, the existence of a family of compact sets $\{K(\omega):\omega\in \Omega\}$ that attracts bounded subsets of $X$ in the sense of 
\eqref{eq-uniformly-attraction}
implies the existence of both MJUA for the NRDS $\varphi$ and a global attractor for the skew product semiflow $\Pi$.

It is well-known, see \cite[Chapter 6]{Bortolan-Carvalho-Langa}, that if 
$\{\Pi(t): t\geq 0\}$ has a global attractor $\mathbb{A}(\omega)$, then $(\psi,\vartheta)$ has a \textit{uniform attractor} given by 
\begin{equation}
\mathcal{A}_\psi(\omega)=\pi_X(\mathbb{A}(\omega)),
\end{equation}
where $\pi_X: X\times \Sigma(\omega)\to X$ is the canonical projection over $X$, i.e., $\mathcal{A}_\psi(\omega)$ satisfies
\begin{equation}\label{eq-uniform-attractor-psi}
\lim_{t\to +\infty} \sup_{\sigma\in \Sigma(\omega)} dist( \psi(t,\sigma)B,\mathcal{A}_\psi(\omega))=0,
\end{equation}
and $\mathcal{A}_\psi(\omega)$ is the minimal closed sets that satisfies \eqref{eq-uniform-attractor-psi}.

\begin{proposition}\label{proposition-equality-minimal-attractors}
    Suppose that the NRDS $(\varphi,\Theta)$ has a minimal jointly uniform attractor $\{\mathcal{A}_U(\tau,\omega):(\tau,\omega)\in \mathbb{R}\times \Omega\}$.
   Then, for every $\omega\in \Omega$, $(\psi,\vartheta)$ has a uniform attractor given by $\mathcal{A}_\psi(\omega)=\mathcal{A}_U(\omega)$.
 
\end{proposition}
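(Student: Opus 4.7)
The plan is to first guarantee the existence of $\mathcal{A}_\psi(\omega)$ from results already in the excerpt, and then to establish $\mathcal{A}_\psi(\omega) = \mathcal{A}_U(\omega)$ by two mutual inclusions, exploiting in each direction the minimality clause of the respective uniform attractor. Both inclusions will be bridged by Lemma \ref{Lemma: NDS and NRDS}, which converts an attraction statement for $\varphi$ indexed by $s \in \mathbb{R}$ into an attraction statement for $\psi$ indexed by the dense orbit of translates $\{\vartheta_s \beta(\cdot, \theta_{(\cdot)}\omega) : s \in \mathbb{R}\}$ inside $\Sigma(\omega)$.

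Fix $\omega \in \Omega$ and write $\mathcal{A}_U(\omega) := \mathcal{A}_U(0,\omega)$. Since the MJUA is compact and attracts bounded subsets of $X$ uniformly in $s \in \mathbb{R}$ for $\varphi$, Lemma \ref{lemma-equivalence-A-K-between-dynamics} supplies a compact attracting set for the skew-product semiflow $\Pi$, so $\Pi$ admits a global attractor $\mathbb{A}(\omega)$ and $(\psi,\vartheta)$ admits the uniform attractor $\mathcal{A}_\psi(\omega) := \pi_X(\mathbb{A}(\omega))$, characterized as the minimal closed set satisfying \eqref{eq-uniform-attractor-psi}. This settles existence; the remaining content is the identification of the two sets.

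For $\mathcal{A}_\psi(\omega) \subset \mathcal{A}_U(\omega)$, the plan is to verify that $\mathcal{A}_U(\omega)$ itself satisfies \eqref{eq-uniform-attractor-psi} and then invoke minimality of $\mathcal{A}_\psi(\omega)$. Via Lemma \ref{Lemma: NDS and NRDS} the MJUA property immediately yields uniform attraction of $\mathcal{A}_U(\omega)$ for $\psi$ along the dense orbit of translates; the extension to all $\sigma \in \Sigma(\omega)$ uses compactness of $\Sigma(\omega)$ together with continuity of $\sigma \mapsto \psi(t,\sigma)x$ uniform on bounded subsets of $X$. Conversely, for $\mathcal{A}_U(\omega) \subset \mathcal{A}_\psi(\omega)$, define the candidate $\Theta$-invariant family $\widetilde{\mathcal{A}}(\tau,\omega) := \mathcal{A}_\psi(\theta_{-\tau}\omega)$ and check property (3) of Definition \ref{Def: URA} for $\varphi$: restricting the supremum over $\sigma \in \Sigma(\omega)$ in \eqref{eq-uniform-attractor-psi} to the orbit $\{\vartheta_s \beta(\cdot,\theta_{(\cdot)}\omega) : s \in \mathbb{R}\}$ and applying Lemma \ref{Lemma: NDS and NRDS} converts it back into uniform attraction for $\varphi$ in $s \in \mathbb{R}$. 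The minimality clause (4) of Definition \ref{Def: URA} then forces $\mathcal{A}_U(\omega) \subset \mathcal{A}_\psi(\omega)$.

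The main obstacle I anticipate is the density/continuity step in the first inclusion: transferring uniform attraction from the dense orbit of translates, where Lemma \ref{Lemma: NDS and NRDS} applies directly, to the whole compact hull $\Sigma(\omega)$, where \eqref{eq-uniform-attractor-psi} must hold. This requires continuous dependence $\sigma \mapsto \psi(t,\sigma)x$ uniform for $(t,x)$ in bounded subsets of $[0,T]\times X$, so as to interchange the supremum over $\sigma \in \Sigma(\omega)$ with the limit $t \to \infty$ using compactness of $\Sigma(\omega)$.
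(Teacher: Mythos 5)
Your proposal is correct and follows essentially the same route as the paper: both inclusions are obtained from the relation of Lemma \ref{Lemma: NDS and NRDS} together with the minimality clauses on each side, and the passage from the orbit $\Sigma_0(\omega)$ to the full hull $\Sigma(\omega)$ is exactly the paper's density-plus-continuity argument (where, in fact, pointwise continuity of $\sigma\mapsto\psi(t,\sigma)x$ already suffices, since the $\epsilon/2$ bound along the approximating translates is uniform over $x\in B$). Your explicit existence step via Lemma \ref{lemma-equivalence-A-K-between-dynamics} is left implicit in the paper's proof but is consistent with the discussion preceding the proposition.
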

\begin{proof}
    
From \eqref{eq-relation-between-varphi-psi} and \eqref{eq-uniform-attractor-psi}, for all bounded subset $B\subset X$, we have
\begin{equation}\label{eq-uniform-attractor-varphi}
\lim_{t\to +\infty} \sup_{t_\sigma\in\mathbb{R}}dist(\varphi(t,t_\sigma,\theta_{t_{\sigma}}\omega)B,\mathcal{A}_\psi(\omega))=0.
\end{equation}
Thus, by minimality, we obtain $\mathcal{A}_U(\omega)\subset\mathcal{A}_\psi(\omega)$, for all $\omega\in\Omega$.
\par Now, since $\mathcal{A}_U$ is the minimal uniform attractor for $\varphi$, Eq. \eqref{eq-relation-between-varphi-psi} implies that  for all bounded subset $B\subset X$, we have
\begin{equation}
\lim_{t\to +\infty} \sup_{\sigma\in \Sigma_0(\omega)
}
dist(\psi(t,\sigma)B,\mathcal{A}_U(\omega))=0,
\end{equation}
where $\Sigma_0(\omega):=\{\beta(t+\cdot,\theta_{t+(\cdot)} \omega): t\in\mathbb{R} \}$.
We claim that the supremum above, can be taken in $\Sigma(\omega)$. Indeed, given $\sigma \in \Sigma(\omega)$, $\epsilon>0$, and $B\subset X$ bounded, there exists $t_0=t_0(\epsilon,B)>0$ and $\{\sigma_n\}\subset \Sigma(\omega)$ such that
$\sigma_n\to \sigma$, and
\begin{equation*}
dist(\psi(t,\sigma_n)B,\mathcal{A}_U(\omega))<\frac{\epsilon}{2}, \ \forall \, t\geq t_0.
\end{equation*}
By taking $n\to +\infty$, the claim holds true by continuity of $\psi$ in $\Sigma(\omega)$. Thus, by minimality of $\mathcal{A}_\psi$, that
$\mathcal{A}_\psi(\omega)\subset \mathcal{A}_U(\omega)$, for all $\omega\in \Omega$.

\end{proof}

Next, we provide an application of Proposition \eqref{proposition-equality-minimal-attractors} to illustrate the importance of having connections between different ways to describe dynamics, as Lemma \ref{lemma-equivalence-A-K-between-dynamics}. 
\begin{example}
    Consider the nonautonomous random differential equation
\begin{equation}\label{eq-limiting-problem-variables-z}
\begin{split} 
\dot{x}_1=&
k(x_2-x_1)+x_1-\beta(t,\theta_t\omega)x_1^3,\\
\dot{x}_2=&
-k
(x_2-x_1)+x_2-\beta(t,\theta_t\omega)x_2^3,
\end{split} 
\end{equation}
for some $k>0$, and assume that $\beta$ satisfies Condition $(H_1)$. 
For each fixed $\omega\in \Omega$, consider $(\psi,\vartheta)$ the NDS associate to \eqref{eq-limiting-problem-variables-z}.
Then it is posible to apply the results from \cite{Carvalho-Langa-Obaya-Rocha}
that completly describe the dynamics of the uniform attractor $\mathcal{A}_\psi(\omega)$.
for $(\psi,\vartheta)$. 
Then, by Proposition 
\ref{proposition-equality-minimal-attractors} we obtain that the associated NRDS $(\varphi,\Theta)$ has a MJUA $\{\mathcal{A}_U(\omega):\omega\in \Omega\}$ given by  $\mathcal{A}_U(\omega)=\mathcal{A}_\psi(\omega)$, for all $\omega\in \Omega$.

\end{example}

Now, we prove the main result of this section.

\begin{theorem}[Stability of the minimal uniform attractor]
\label{th-Stability}
    Let $\varphi$ be the NRDS associated Eq. \eqref{eq-generatiton-NRDS}. 
    If $\beta$ is bounded and $\alpha$-H\"older continuous, and there is a family of compact sets
    $\{K(\omega):\omega\in \Omega\}$
    that attracts uniformly (w.r.t initial time $s$) under the action of $(\varphi,\Theta)$. Then, given $\epsilon>0$, there exists 
	 $0< \delta < \epsilon$ such that 
	\begin{equation}
	\bigcup_{t\geq 0}\bigcup_{s\in \mathbb{R}} 
	\varphi(t,s,\theta_s\omega)O_{\delta}(\mathcal{A}_U(\omega))\subset O_\epsilon(\mathcal{A}_U(\omega)).
	\end{equation}
\end{theorem}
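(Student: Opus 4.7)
The plan is to lift the stability question to the skew product semiflow $\Pi$ on $Y := X \times \Sigma(\omega)$ introduced earlier in this section, apply the classical Lyapunov stability of compact global attractors, and then project back to $X$. Condition $(H_1)$ guarantees via Lemma \ref{lemma-sigma(omega)-compactenss} that $\Sigma(\omega)$ is compact, and Lemma \ref{lemma-equivalence-A-K-between-dynamics} lifts the uniformly attracting family $\{K(\omega)\}$ on $X$ to a compact uniformly attracting set in $Y$; hence $\Pi$ admits a compact global attractor $\mathbb{A}(\omega) \subset Y$, and Proposition \ref{proposition-equality-minimal-attractors} identifies $\mathcal{A}_U(\omega) = \pi_X \mathbb{A}(\omega)$.

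Next, I would invoke the classical result that any compact, positively invariant set that attracts bounded sets under a $C^0$-semigroup on a complete metric space is Lyapunov stable (this combines positive invariance of $\mathbb{A}(\omega)$ with uniform continuity of $\Pi$ on compact time windows, together with uniform attraction for large times). Explicitly, for every $\eta > 0$ there exists $\delta_0 > 0$ such that
\begin{equation*}
\bigcup_{t \geq 0} \Pi(t)\, O_{\delta_0}^Y(\mathbb{A}(\omega)) \subset O_\eta^Y(\mathbb{A}(\omega)).
\end{equation*}
Equipping $Y$ with the sum product metric $\tilde d = d_X + \rho$ makes $\pi_X$ a $1$-Lipschitz map, so projecting this inclusion onto $X$ will deliver the desired bound inside the $\eta$-neighborhood of $\mathcal{A}_U(\omega)$, provided the initial datum lies in the $\delta_0$-neighborhood of $\mathbb{A}(\omega)$ in $Y$.

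To transfer the statement onto the NRDS $\varphi$, I would verify that for every $\sigma \in \Sigma(\omega)$ and every $y$ with $d_X(y, \mathcal{A}_U(\omega)) < \delta$, one has $(y, \sigma) \in O_{\delta_0}^Y(\mathbb{A}(\omega))$ for $\delta$ small enough depending only on $\delta_0$. Here the key facts are that $\pi_\Sigma \mathbb{A}(\omega) = \Sigma(\omega)$ (so every fiber $\mathbb{A}_\sigma := \{x : (x,\sigma) \in \mathbb{A}(\omega)\}$ is non-empty and $\tilde d((y,\sigma),\mathbb{A}(\omega)) \leq d_X(y, \mathbb{A}_\sigma)$), and that the compactness of $\Sigma(\omega)$ combined with the upper semicontinuity of $\sigma \mapsto \mathbb{A}_\sigma$ yields a uniform modulus controlling $d_X(y, \mathbb{A}_\sigma)$ in terms of $d_X(y, \mathcal{A}_U(\omega))$ uniformly in $\sigma$. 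Finally, Lemma \ref{Lemma: NDS and NRDS} realises $\varphi(t, s, \theta_s \omega) = \psi(t, \sigma_s)$ with $\sigma_s = \vartheta_s \beta(\,\cdot\,,\theta_{(\cdot)}\omega) \in \Sigma_0(\omega) \subset \Sigma(\omega)$, converting the uniformity in $\sigma \in \Sigma(\omega)$ into the uniformity in $s \in \mathbb{R}$ claimed by the theorem. The main obstacle is precisely this third step: the uniform-in-$\sigma$ control of the product-space proximity by the $X$-space proximity to $\mathcal{A}_U(\omega)$, which is exactly where the compactness of the symbol space $\Sigma(\omega)$, delivered by hypothesis $(H_1)$, is indispensable.
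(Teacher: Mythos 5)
Your overall skeleton coincides with the paper's: compactness of $\Sigma(\omega)$ from $(H_1)$ (Lemma \ref{lemma-sigma(omega)-compactenss}), the lifting of the uniformly attracting family to the skew product via Lemma \ref{lemma-equivalence-A-K-between-dynamics} to get a global attractor $\mathbb{A}(\omega)$ for $\Pi$, the identification $\mathcal{A}_U(\omega)=\mathcal{A}_\psi(\omega)=\pi_X(\mathbb{A}(\omega))$ through Proposition \ref{proposition-equality-minimal-attractors}, and the transfer of the final estimate to $\varphi$ through \eqref{eq-relation-between-varphi-psi}. Where you diverge is the middle step: you try to derive the uniform-in-$\sigma$ stability of the projected attractor from the Lyapunov stability of $\mathbb{A}(\omega)$ in the product space, while the paper does not argue this from scratch but invokes \cite[Lemma 2.1]{Carvalho-Lappicy-Moreira-Sousa}, which is precisely a stability statement for the uniform attractor of $(\psi,\vartheta)$, i.e.\ $\bigcup_{t\geq 0}\bigcup_{\sigma\in\Sigma(\omega)}\psi(t,\sigma)O_\delta(\mathcal{A}_\psi(\omega))\subset O_\epsilon(\mathcal{A}_\psi(\omega))$.

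The step you yourself single out as the key one is where your argument breaks. You claim that compactness of $\Sigma(\omega)$ plus upper semicontinuity of $\sigma\mapsto\mathbb{A}_\sigma$ gives a uniform modulus bounding $d_X(y,\mathbb{A}_\sigma)$ by $d_X(y,\mathcal{A}_U(\omega))$, so that $d_X(y,\mathcal{A}_U(\omega))<\delta$ forces $(y,\sigma)\in O^Y_{\delta_0}(\mathbb{A}(\omega))$ for \emph{every} $\sigma$. This is false in general: $\mathcal{A}_U(\omega)=\bigcup_{\sigma'}\mathbb{A}_{\sigma'}$, so the distance to the union can be $0$ while the distance to a fixed fiber $\mathbb{A}_\sigma$ stays bounded away from $0$. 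Indeed, taking $y\in\mathcal{A}_U(\omega)$ your modulus would force $y\in\mathbb{A}_\sigma$ for all $\sigma$, i.e.\ all fibers of $\mathbb{A}(\omega)$ would have to coincide, which fails already for simple nonautonomous examples (upper semicontinuity only compares $\mathbb{A}_{\sigma_n}$ with $\mathbb{A}_\sigma$ along $\sigma_n\to\sigma$; it says nothing about distance to a single fiber versus distance to the union). Consequently, Lyapunov stability of $\mathbb{A}(\omega)$ in $X\times\Sigma(\omega)$ does not, by projection alone, yield the uniform stability of $\mathcal{A}_\psi(\omega)$ with respect to all symbols: a point $X$-close to $\mathcal{A}_U(\omega)$ paired with an arbitrary $\sigma$ need not be product-close to $\mathbb{A}(\omega)$. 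To close the gap you must either prove the uniform-attractor stability statement by a genuinely different argument (this is the content of the cited Lemma 2.1 of \cite{Carvalho-Lappicy-Moreira-Sousa}) or simply invoke that lemma, as the paper does, and then finish exactly as you propose via Proposition \ref{proposition-equality-minimal-attractors} and \eqref{eq-relation-between-varphi-psi}.
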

\begin{proof}
    Note that, by Lemma \ref{lemma-equivalence-A-K-between-dynamics}, the condition that there is a family of compact sets that attracts uniformly (w.r.t. s) together with the fact that $\Sigma(\omega)$ is compact, see Lemma \ref{lemma-sigma(omega)-compactenss}, imply that
    the skew-product semiflow $\Pi(\cdot)$
    has a global attractor 
    $\mathbb{A}(\omega)\subset X\times \Sigma(\omega)$.
	Now, using that $\mathcal{A}_\psi(\omega)=\pi_X(\mathbb{A}(\omega))$, the stability of the global attractor $\mathbb{A}(\omega)$ implies that 
	for each $\epsilon>0$ there is $\delta\in (0,\epsilon)$ such that 
	\begin{equation*}
	\bigcup_{t\geq 0}\bigcup_{\sigma\in \Sigma(\tau,\omega)} 
	\psi(t,\sigma)O_{\delta}(\mathcal{A}_\psi(\tau,\omega))\subset O_\epsilon(\mathcal{A}_\psi(\tau,\omega)),
	\end{equation*}
    see \cite[Lemma 2.1]{Carvalho-Lappicy-Moreira-Sousa} for details. From Proposition \ref{proposition-equality-minimal-attractors}, we have $\mathcal{A}_U(\omega)=\mathcal{A}_\psi(\omega)$, then the proof follows from the relation given by \eqref{eq-relation-between-varphi-psi}.
    
\end{proof}

From nonautonomous deterministic differential equations, stability of the uniform attractor plays an important hole on the existence of \textit{local inertial manifolds}, which are family of subsets of $X$ that attracts exponentially bounded subsets of $X$, see \cite[Subsection 2.3]{Carvalho-Lappicy-Moreira-Sousa} for details. See Example \eqref{eq-applications-scalar-parabolic} for the special case of RDS.

\section{Minimal uniform attractor for random dynamical systems}
    In this section, we apply the results obtained before to the special case of random dynamical systems and apply the results to stochastic differential equations.

    Firstly, we recall the concept of random dynamical system in a complete separable metric space $(X,d)$. A family of maps $\{\phi(t,\omega):X\to X; \  (t,\omega)\in\mathbb{R}^+\times\Omega\}$ 
	is a continuous \textbf{random dynamical system} 
    (\textbf{RDS}) driven by a metric dynamical system $\theta$, which we denote simply by $(\phi,\theta)$, if
	\begin{enumerate}
		\item the mapping
		$\mathbb{R}^+ \times \Omega\times X\ni (t, \omega,x)\mapsto \phi(t,\omega)x\in X$
		is measurable;
		\item
		$\phi(0,\omega)=Id_X$,
		for each $\omega\in\Omega$;
		\item $\phi(t+s,\omega)=\phi(t,\theta_s\omega)\,  \phi(s,\omega)$,
		for every $t,s \in 
		\mathbb{R}^+ $, and $(\tau,\omega)\in\mathbb{R}\times \Omega$;
        \item $\phi(t,\omega):X\to X$ is a continuous map for each  $(t,\omega)\in\mathbb{R}^+\times\Omega$.
	\end{enumerate}

    Now we introduce a the \textit{minimal uniform attractor} for the RDS $(\phi,\Theta)$, where the attraction will be uniform with respect to $\{\theta_s\omega: s\in \mathbb{R}\}$ 
 for each $\omega\in \Omega$ fixed.
    
\begin{definition} \label{Def:MUA}
\emph{Let $(\phi,\theta)$ be a RDS, we say that a family of compact sets $\{\mathcal{A}_U(\omega): \omega)\in \Omega\}$ is a \textbf{minimal uniform attractor (MUA)} if 
	\begin{enumerate}
		\item $\mathcal{A}_U$ is $\theta$-invariant, i.e., $\mathcal{A}_U(\omega)=\mathcal{A}_U(\theta_t\omega)$, for each $\omega\in \Omega$;
		\item $\omega\mapsto A_U(\omega)$ is a random set;
		\item for each bounded set $B\subset X$, we have
		\begin{equation}
		\lim_{t\to +\infty}\sup_{s\in \mathbb{R}}\  dist(\varphi(t,\theta_s\omega)B,\mathcal{A}_U(\omega))=0, \ \hbox{ for all } \omega\in \Omega;
		\end{equation}
        \item $\mathcal{A}_U(\omega)$ is the minimal closed family with Property (3).
	\end{enumerate} }
\end{definition}

    With similar arguments of the proof of Theorem \ref{th-existence-minimal-uniform-attractor}, we are able to establish a condition for existence of a minimal uniform attractor for RDS. 
    
\begin{theorem}[Existence of minimal uniform attractor]\label{th-existence-minimal-uniform-attractor-RDS}
    Let $\phi$ be a RDS. Assume that there exists a family  $\{K(\omega):(\omega)\in \Omega\}$ of compact subsets of $X$
    such that for each bounded subset $B\subset X$ we have
\begin{equation}\label{eq-hypotheses-unifom.}
		\lim_{t\to +\infty}\sup_{s\in \mathbb{R}}\  dist(\varphi(t,\theta_s\omega)B,K(\omega))=0.
		\end{equation}
        Then, there exists a minimal uniform attractor for $\phi$, given by
        \begin{equation}\label{eq-th-existence-MURA}
\mathcal{A}_U(\omega)=\overline{\bigcup_{B\in\mathcal{B}} L_U (B,\omega))},
        \end{equation}
        where $\mathcal{B}$ is the class of all bounded non-empty subsets of $X$, and 
    \begin{equation*}
        L_U(B,\omega)
        =\bigcap_{r\geq 0}\overline{\bigcup_{t\geq r}\bigcup_{s\in\mathbb{R}}\phi(t,\theta_s\omega)B
        }, \ \omega\in  \Omega.
    \end{equation*}
\end{theorem}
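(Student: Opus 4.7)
The plan is to mirror the structure of the proof of Theorem \ref{th-existence-minimal-uniform-attractor}, simplified by the absence of the time variable $\tau$. First I would show that for each bounded $B \subset X$ and each $\omega \in \Omega$, the uniform omega-limit set $L_U(B,\omega)$ is a nonempty compact subset of $K(\omega)$ that attracts $B$ uniformly in $s\in\mathbb{R}$ and is minimal among closed sets with this property. The nonemptiness and compactness follow from hypothesis \eqref{eq-hypotheses-unifom.}: any sequence $\phi(t_n,\theta_{s_n}\omega)x_n$ with $t_n \to \infty$, $s_n \in \mathbb{R}$, $x_n \in B$ has a convergent subsequence by the attraction property toward $K(\omega)$. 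The uniform attraction $\lim_{t\to+\infty}\sup_{s\in\mathbb{R}} \mathrm{dist}(\phi(t,\theta_s\omega)B, L_U(B,\omega)) = 0$ and minimality follow by standard omega-limit arguments.

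Next I would verify the four defining properties of Definition \ref{Def:MUA} for $\mathcal{A}_U(\omega)$. For $\theta$-invariance, observe that
\[
L_U(B,\theta_t\omega) = \bigcap_{r\geq 0} \overline{\bigcup_{t'\geq r}\bigcup_{s\in\mathbb{R}} \phi(t',\theta_{s+t}\omega)B} = L_U(B,\omega),
\]
since the double union over $s\in\mathbb{R}$ is invariant under the shift $s \mapsto s+t$; taking union over $B\in\mathcal{B}$ and closure preserves this equality. Uniform attraction for $\mathcal{A}_U(\omega)$ follows from the attraction of each $L_U(B,\omega)$ to itself, and minimality is inherited: any closed family $\mathcal{A}'(\omega)$ satisfying Property (3) must contain each $L_U(B,\omega)$ by the minimality established in the first step, hence contains $\mathcal{A}_U(\omega)$.

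The main obstacle, as in Theorem \ref{th-existence-minimal-uniform-attractor}, is establishing measurability of $\omega \mapsto \mathcal{A}_U(\omega)$. I would first fix an open ball $B$ and show that $\omega \mapsto L_U(B,\omega)$ is a random set. By the measurability assumption in the definition of RDS, the map $(t,s,\omega) \mapsto \phi(t,\theta_s\omega)B$ is measurable (with an appropriate selection-theoretic reading). For $x \in X$ and $a \in \mathbb{R}$, the set
\[
\bigl\{\omega : \inf_{t\geq r,\, s\in\mathbb{R}} d(x,\phi(t,\theta_s\omega)B) < a \bigr\} = \pi_\Omega\bigl\{(t,s,\omega): d(x,\phi(t,\theta_s\omega)B) < a,\; t\geq r,\; s\in\mathbb{R}\bigr\}
\]
is measurable with respect to the $\mathbb{P}$-completion of $\mathcal{F}$ by the projection theorem \cite[Theorem III.23]{CastaingValadier}. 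Taking countable intersection over rational $r > 0$ yields that $\omega \mapsto d(x, L_U(B,\omega))$ is measurable. Finally, since $X$ is separable, any bounded set is contained in a countable union of balls, and taking the countable union of balls of integer radii centered at a countable dense set in \eqref{eq-th-existence-MURA} shows that $\omega \mapsto \mathcal{A}_U(\omega)$ is a random set, completing the proof.
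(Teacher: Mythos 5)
Your proposal is correct and follows essentially the same route as the paper, which proves this theorem simply by invoking the argument of Theorem \ref{th-existence-minimal-uniform-attractor}: compactness and minimality of the uniform omega-limit sets $L_U(B,\omega)$ from the attraction to $K(\omega)$, $\theta$-invariance via the shift $s\mapsto s+t$, and measurability through the projection theorem of \cite[Theorem III.23]{CastaingValadier} with countable intersections over rational $r$ and a countable union of balls. Your write-up fills in the same steps the paper leaves implicit, at the same level of rigor (including measurability only with respect to the $\mathbb{P}$-completion of $\mathcal{F}$, exactly as in the paper).
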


A class of examples from Section \ref{sec-Stability-of-MJUA} are the following
\begin{equation}
\dot{v}=h(\beta(\theta_t\omega),v), \ t> 0, \ v(0)=x_0\in X,
\end{equation}
where $h:\mathbb{R}\times X\to X$ is continuous and Lipschitz in the second variable uniformly in bounded subsets of $X$, and $\beta:\Omega\to \mathbb{R}$.

\begin{lemma}
Assume that for each $\omega\in \Omega$
the mapping $t\mapsto \beta(\theta_t\omega)$ is bounded and H\"older continuous on compact intervals of $\mathbb{R}$, i.e., there exist $0<\alpha<1/2$ such that for each $M>0$ there exists $\ell=\ell(\omega,M)>0$ such that
\begin{equation}
|\beta(\theta_t\omega)- \beta(\theta_s\omega)|\leq \ell |t-s|^\alpha, \ \forall \ t,s\in [-M,M].
\end{equation}
 Then $\Sigma(\omega)$,
	the hull of $t\mapsto \beta(\theta_t\omega)$,
    \begin{equation*}
        \Sigma(\omega)=\overline{\{\beta( \theta_{t+(\cdot)} \omega): t\in\mathbb{R} \}}
    \end{equation*}
    is a compact set in $C(\mathbb{R})$ for each $\omega\in \Omega$. 
\end{lemma}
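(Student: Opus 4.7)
The plan is to reduce the statement to the already-established Lemma \ref{lemma-sigma(omega)-compactenss}, since this is precisely the autonomous-cocycle special case in which the process $\beta(\tau,\omega)$ does not depend explicitly on $\tau$. The compactness of the hull in $(C(\mathbb{R}),\rho)$ depends only on the regularity --- boundedness and local H\"older continuity --- of the one-variable function $t\mapsto\beta(\theta_t\omega)$, so the argument given for Lemma \ref{lemma-sigma(omega)-compactenss} applies almost verbatim.

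Concretely, I would fix $\omega\in\Omega$, set $f_\omega(t):=\beta(\theta_t\omega)$, and observe that by hypothesis $f_\omega$ is bounded on $\mathbb{R}$ and $\alpha$-H\"older continuous on each compact interval with constant $\ell(\omega,M)$. Since $C(\mathbb{R})$ endowed with the compact-open topology is metrizable, it suffices to establish sequential precompactness of the family of translates $\{\vartheta_t f_\omega:t\in\mathbb{R}\}$ in order to conclude that its closure $\Sigma(\omega)$ is compact. Given any sequence $\{t_n\}\subset\mathbb{R}$, I would apply the Arzel\`a--Ascoli theorem on each compact interval $[-M,M]$ to the restrictions $\vartheta_{t_n} f_\omega|_{[-M,M]}$: the boundedness of $f_\omega$ yields uniform boundedness, while the local H\"older estimate yields equicontinuity of this family. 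A standard diagonal extraction across $M\in\mathbb{N}$ then produces a single subsequence of $\{\vartheta_{t_n} f_\omega\}$ converging uniformly on every compact subset of $\mathbb{R}$, i.e., converging in the compact-open topology $\rho$, with limit lying by construction in $\Sigma(\omega)$.

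Equivalently, as was done in the proof of Lemma \ref{lemma-sigma(omega)-compactenss}, one may invoke Corollary 2.1 of \cite[Chapter V]{Chepyzhov-Vishik-book} directly, which packages exactly this translation-compactness criterion in $C(\mathbb{R})$ from local boundedness and equicontinuity hypotheses. No genuine obstacle arises beyond this reduction; the only subtle point --- the transfer of equicontinuity from fixed intervals to the translated ones appearing in the Arzel\`a--Ascoli step --- is handled identically to the nonautonomous case and is the reason the earlier proof carries over without modification.
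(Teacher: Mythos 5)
Your overall route coincides with the paper's: the paper gives no separate proof of this RDS lemma, treating it as the special case of Lemma \ref{lemma-sigma(omega)-compactenss}, whose proof is exactly the appeal to Corollary 2.1 of \cite[Chapter V]{Chepyzhov-Vishik-book} (translation compactness built from Arzel\`a--Ascoli on bounded intervals), which you reproduce and then spell out via a diagonal extraction.

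However, the step you yourself flag as ``the only subtle point'' is a genuine gap as written, both in your write-up and, arguably, in the one-line citation it imitates. The hypothesis only provides a H\"older constant $\ell(\omega,M)$ valid for $t,s\in[-M,M]$, and nothing prevents $\ell(\omega,M)\to\infty$ as $M\to\infty$. Your Arzel\`a--Ascoli step applied to the restrictions $\vartheta_{t_n}f_\omega|_{[-M,M]}$ needs equicontinuity of the \emph{translated} family, i.e.\ a modulus for $f_\omega$ on the shifted windows $t_n+[-M,M]$ that is uniform in $n$; the constant actually available there is $\ell(\omega,|t_n|+M)$, which may be unbounded along the sequence. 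So ``the local H\"older estimate yields equicontinuity of this family'' is not justified, and the implication ``bounded $+$ locally H\"older $\Rightarrow$ compact hull'' is false as stated: $f(t)=\sin(t^2)$ is bounded and Lipschitz (hence $\alpha$-H\"older) on every $[-M,M]$ with window-dependent constants, yet its translates $\sin((\cdot+t_n)^2)$ oscillate on $[0,1]$ with frequency of order $t_n$, so no subsequence converges uniformly on $[0,1]$ and the hull is not compact in $C(\mathbb{R})$. What the cited translation-compactness criterion of Chepyzhov--Vishik really requires is boundedness together with \emph{uniform} continuity on all of $\mathbb{R}$ (here, e.g., a H\"older constant $\ell=\ell(\omega)$ valid for all $t,s$ with $|t-s|\le 1$, independent of the location of the window). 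With that translation-uniform modulus your diagonal argument goes through verbatim; without it, the claim that the transfer of equicontinuity ``is handled identically to the nonautonomous case'' is not a proof, since the nonautonomous case (Lemma \ref{lemma-sigma(omega)-compactenss}) never addresses this point either. You should either strengthen the regularity hypothesis to a uniform one or verify it for the specific processes $t\mapsto\beta(\theta_t\omega)$ under consideration.
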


All the results of Section \ref{sec-Stability-of-MJUA} can be done for RDS, to ilustrate we state the result on the stability for MUA.

\begin{theorem}[Stability of the minimal uniform attractor]
    Let $\phi$ be the RDS associated Eq. 
    If $\beta$ is bounded and $\alpha$-H\"older continuous, and there is a family of compact sets
    $\{K(\omega):\omega\in \Omega\}$
    that attracts uniformly (w.r.t initial time $s$) under the action of $(\phi,\theta)$. Then, given $\epsilon>0$, there exists 
	 $0< \delta < \epsilon$ such that 
	\begin{equation}
	\bigcup_{t\geq 0}\bigcup_{s\in \mathbb{R}} 
	\varphi(t,\theta_s\omega)O_{\delta}(\mathcal{A}_U(\omega))\subset O_\epsilon(\mathcal{A}_U(\omega)).
	\end{equation}
\end{theorem}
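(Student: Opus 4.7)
The plan is to mirror the proof of Theorem \ref{th-Stability} in the RDS setting, since all the constructions from Section \ref{sec-Stability-of-MJUA} (hull of the coefficient, skew-product semiflow, equivalence of attraction conditions, identification of the minimal uniform attractor with the projection of the global attractor of the skew-product) transfer verbatim, with the simplification that there is no extra $\tau$ parameter. The role previously played by $\beta(t+\tau,\theta_t\omega)$ is now played by $\beta(\theta_t\omega)$, whose hull $\Sigma(\omega)$ is compact by the preceding Lemma.

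First, for each $\omega\in\Omega$ and each $\sigma\in\Sigma(\omega)$, consider the problem $\dot v = h(\sigma(t),v)$ and denote its solution semigroup by $\psi(t,\sigma)$; the skew-product $\Pi(t)(x,\sigma)=(\psi(t,\sigma)x,\vartheta_t\sigma)$ is a semiflow on $X\times\Sigma(\omega)$. By the RDS analog of Lemma \ref{lemma-equivalence-A-K-between-dynamics}, the hypothesis that $\{K(\omega)\}$ attracts uniformly in the initial time $s$ under $(\phi,\theta)$, together with compactness of $\Sigma(\omega)$, implies that $\Pi(\cdot)$ admits a global attractor $\mathbb{A}(\omega)\subset X\times\Sigma(\omega)$. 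Its $X$-projection $\mathcal{A}_\psi(\omega)=\pi_X(\mathbb{A}(\omega))$ is the uniform attractor for $(\psi,\vartheta)$, and the RDS analog of Proposition \ref{proposition-equality-minimal-attractors} gives the identification $\mathcal{A}_\psi(\omega)=\mathcal{A}_U(\omega)$.

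Next, I would apply the standard stability property of global attractors for continuous semigroups on $X\times\Sigma(\omega)$, as in \cite[Lemma 2.1]{Carvalho-Lappicy-Moreira-Sousa}: given $\epsilon>0$, there exists $\delta\in(0,\epsilon)$ with
\begin{equation*}
\bigcup_{t\geq 0}\bigcup_{\sigma\in\Sigma(\omega)}\psi(t,\sigma)\,O_\delta(\mathcal{A}_\psi(\omega))\subset O_\epsilon(\mathcal{A}_\psi(\omega)).
\end{equation*}
The concluding step is to pull this stability back to $\phi$ via the RDS version of the identity \eqref{eq-relation-between-varphi-psi}: for every $s\in\mathbb{R}$, the shifted symbol $\sigma_s:=\vartheta_s\beta(\theta_{(\cdot)}\omega)$ lies in $\Sigma(\omega)$ and, by uniqueness of solutions, $\psi(t,\sigma_s)=\phi(t,\theta_s\omega)$ for all $t\geq 0$. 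Substituting into the display above and taking the supremum over $s\in\mathbb{R}$ (which is a subset of the supremum over $\sigma\in\Sigma(\omega)$) yields the desired inclusion, using $\mathcal{A}_\psi(\omega)=\mathcal{A}_U(\omega)$.

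The only mildly delicate point is the transfer step: one must check that every orbit $t\mapsto\phi(t,\theta_s\omega)x$ is literally an orbit of $\psi$ for the symbol $\sigma_s\in\Sigma(\omega)$, which is immediate by uniqueness of solutions of \eqref{eq-semilinear-with-sigma}. Everything else is a direct restatement of the nonautonomous arguments already developed, with the parameter $\tau$ suppressed; the Hölder hypothesis on $\beta$ is used solely through the compactness of $\Sigma(\omega)$ provided by the preceding Lemma.
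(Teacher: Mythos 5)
Your proposal is correct and follows essentially the same route as the paper: the paper proves the nonautonomous version (Theorem \ref{th-Stability}) by combining compactness of the hull, the equivalence Lemma \ref{lemma-equivalence-A-K-between-dynamics}, the identification $\mathcal{A}_U(\omega)=\mathcal{A}_\psi(\omega)=\pi_X(\mathbb{A}(\omega))$ from Proposition \ref{proposition-equality-minimal-attractors}, the stability of the skew-product global attractor via \cite[Lemma 2.1]{Carvalho-Lappicy-Moreira-Sousa}, and the transfer identity \eqref{eq-relation-between-varphi-psi}, and it asserts the RDS case by suppressing $\tau$ exactly as you do. Your explicit check that every orbit $t\mapsto\phi(t,\theta_s\omega)x$ is an orbit of $\psi$ for the symbol $\sigma_s\in\Sigma(\omega)$ is the same uniqueness-of-solutions argument underlying Lemma \ref{Lemma: NDS and NRDS}.
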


An application of the above result can be explored on the following scalar parabolic PDE under random influences
\begin{equation}\label{eq-applications-scalar-parabolic}
\begin{array}{l l} 
u_t=(a_\nu(x) u_x)_x+u-\beta(\theta_t\omega) u^3, & \ x\in (0,1), \ t>0\\
u_x(t,0) = u_x(t,1)=0, & \  t>0 \\
u(0,x)=u_0(x),&  \ x\in (0,1),
\end{array} 
\end{equation}
where $\beta:\Omega\to \mathbb{R}$ satisfies Condition $(H_1)$, and 
the diffusivity 
$a_\nu \in C^2(\mathbb{R})$ 
satisfies $a_\nu(x)>0$, for all $x\in [0,1]$ and $\nu > 0$,  it is small in a neighborhood of a point $x_*\in (0,1)$ and it is large outside this neighborhood for sufficiently small $\nu>0$, see details \cite{Carvalho-Pereira-94}.
As in \cite[Subsection 3.4]{Carvalho-Lappicy-Moreira-Sousa}, we use the stability of the MUA to show that the dynamics associated with \eqref{eq-applications-scalar-parabolic} are governed by those of the ODE \eqref{eq-limiting-problem-variables-z} with time independent $\beta$, i.e., $\beta(t,\omega):=\beta(\omega)$ for all $(t,\omega)\in \mathbb{R}\times \Omega$.

\section{Applications to SDEs}
\label{sec:application-SDEs}
Finally, we study some applications to stochastic differential equations with time-dependent noise. More precisely, let $X$ be a separable Banach space. Consider a Stratonovich stochastic differential equation 
\begin{equation}\label{eq-Stratonovich-SDE}
du=Au dt + f( u) dt + \kappa(t)u\circ dW_t, \, t>\tau \ u(\tau)=x_0\in X,
\end{equation}
where $A$ generates a $C_0$-semigroup (or analytic semigroup),
$f:X\to X$ is continuous, $\kappa: \mathbb{R}\to \mathbb{R}$ is continuously differentiable
and $\{W_t:t\in \mathbb{R}\}$ is the scalar two-sided Wiener process. 

We show that the solutions of \eqref{eq-Stratonovich-SDE} generate a NRDS, by proving a topological conjugacy with the solutions of a nonautonomous random differential equation. This is extended from \cite{Duan-Lu-Schmalfuss-03}, which is done for the case $\kappa=1$.

Assume that for each $\tau$ and $u_0$, there exists a solution $u(t,\tau,\omega,x_0)$ of \eqref{eq-Stratonovich-SDE} defined for all $t\geq \tau$. 
Define 
$v(t,\tau,\omega,x_0)=e^{-\kappa(t)z(\theta_t\omega)}u(t,\tau,\omega,x_0)$, where $z$ is the Ornstein-Uhlenbeck process defined in \eqref{eq-OU-equation}. Thus, $v$ satisfies the following nonautonomous random differential equation  
\begin{equation}\label{eq-Stratonovich-NRDE}
	\dot{v} =Av+e^{-\kappa(t)z(\theta_t\omega)}
	f(e^{\kappa(t)z(\theta_t\omega)}v)
	+ [\kappa(t)-\dot{\kappa}(t)] z(\theta_t\omega)v, \ t>\tau.
\end{equation}

Then the NRDS associated to \ref{eq-Stratonovich-NRDE} is given by:
\begin{equation}\label{eq-generation-of-psi-NRDS}
\begin{aligned}
\psi(t,\tau,\omega)&
&=v(t+\tau,\tau,\theta_{-\tau}\omega, \cdot),
\end{aligned}
\end{equation}
where $v(t,\tau,\omega, v_0)$ is the solution of \eqref{eq-Stratonovich-NRDE} with initial condition $v_0=v(\tau,\tau,\omega, v_0)$ and  $t\geq 0$.
Therefore, by the reverse process, 
the candidate for solution of \eqref{eq-Stratonovich-SDE} is 
\begin{equation}\label{eq-psi-hat}
\begin{aligned} 
\hat{\psi}(t,\tau, \omega)u_0&=
e^{\kappa(t)z(\theta_{t+\tau}(\theta_{-\tau}\omega))}v(t+\tau,\tau,\theta_{-\tau}\omega,e^{\kappa(\tau)z(\theta_{\tau}(\theta_{-\tau}\omega))}x_0) \\
&=
e^{\kappa(t)z(\theta_{t}\omega)}v(t+\tau,\tau,\theta_{-\tau}\omega,e^{\kappa(\tau)z(\omega)}x_0), \ t\geq 0.
\end{aligned}
\end{equation}
By the It\^o's formula one sees that $\hat{\psi}(t,\tau,\omega)x_0$ is the solution of \eqref{eq-Stratonovich-SDE}.
	These calculations prove the following result: 
	
	\begin{theorem}[Nonautonomous cohomology]\label{Thm-conjugation-NRDS}
		Let $\psi$ and $\hat{\psi}$ be the NRDS given by equations 
		\eqref{eq-generation-of-psi-NRDS} and \ref{eq-psi-hat}, respectively. The transformation $T:\mathbb{R}\times\Omega\times X\to X$ given by 
        $$
        T(\tau,\omega,x):=e^{-\kappa(\tau)z(\omega)}x
        $$ 
        is a random cocycle conjugacy between $\psi$ and $\hat{\psi}$. Precisely, 
        for each $(\tau,x_0)\in \mathbb{R}\times X$, we have
		\begin{equation}
		\hat{\psi}(t,\tau,\omega)x_0=T^{-1}(\Theta_t(\tau,\omega),\psi(t,\tau,\omega)T(\tau,\omega)x_0), \ \forall\, t\geq 0,
		\end{equation}
		and $(\hat{\psi},\Theta)$ is a  nonautonomous random dynamical system.
	\end{theorem}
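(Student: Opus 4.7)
The plan is to reduce the proof to three ingredients: (i) elementary properties of the pointwise transformation $T$; (ii) a direct algebraic verification of the conjugacy identity; and (iii) an abstract transfer of the NRDS axioms from $(\psi,\Theta)$ to $(\hat{\psi},\Theta)$ through this conjugacy. First, I would record that for each fixed $(\tau,\omega)\in\mathbb{R}\times\Omega$ the map $x\mapsto T(\tau,\omega)x = e^{-\kappa(\tau)z(\omega)}x$ is a linear homeomorphism of $X$ with inverse $y\mapsto e^{\kappa(\tau)z(\omega)}y$; joint measurability in $(\tau,\omega,x)$ follows from the continuity of $\kappa$ and the measurability of the Ornstein--Uhlenbeck random variable $z$.

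Second, I would verify the conjugacy identity by direct substitution into \eqref{eq-generation-of-psi-NRDS} and \eqref{eq-psi-hat}. Indeed,
\[
\psi(t,\tau,\omega)\,T(\tau,\omega)x_0 = v\bigl(t+\tau,\tau,\theta_{-\tau}\omega,\, e^{-\kappa(\tau)z(\omega)}x_0\bigr),
\]
and applying $T^{-1}(\Theta_t(\tau,\omega),\cdot) = e^{\kappa(t+\tau)z(\theta_t\omega)}(\cdot)$ yields the claimed right-hand side. Comparing with $\hat{\psi}(t,\tau,\omega)x_0$ then reduces to matching the scalar exponential factors, which is precisely the It\^o--Stratonovich calculation sketched just before the theorem statement, combined with the shift identities $\theta_{t+\tau}(\theta_{-\tau}\omega) = \theta_t\omega$ and $\theta_\tau(\theta_{-\tau}\omega) = \omega$.

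Third, I would transfer the NRDS axioms from $(\psi,\Theta)$ to $(\hat{\psi},\Theta)$ through the conjugacy. The identity $\hat{\psi}(0,\tau,\omega) = \mathrm{Id}_X$ is immediate from $T^{-1}\circ T = \mathrm{Id}$, and the cocycle property follows by inserting $T_{\Theta_s(\tau,\omega)}\circ T^{-1}_{\Theta_s(\tau,\omega)}$ between two factors of $\psi$ and invoking its cocycle property:
\begin{align*}
\hat{\psi}(t+s,\tau,\omega)
&= T^{-1}_{\Theta_{t+s}(\tau,\omega)}\circ \psi(t+s,\tau,\omega)\circ T_{\tau,\omega}\\
&= T^{-1}_{\Theta_{t+s}(\tau,\omega)}\circ \psi(t,\Theta_s(\tau,\omega))\circ T_{\Theta_s(\tau,\omega)}\circ T^{-1}_{\Theta_s(\tau,\omega)}\circ \psi(s,\tau,\omega)\circ T_{\tau,\omega}\\
&= \hat{\psi}(t,\Theta_s(\tau,\omega))\circ \hat{\psi}(s,\tau,\omega).
\end{align*}
Measurability and continuity of $\hat{\psi}$ in $(t,x)$ follow from those of $\psi$ composed with $T$ and $T^{-1}$. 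The hard part will be the bookkeeping of shifts in $\omega$: one has to align the internal OU process inside $v$ (which sees $\theta_{-\tau}\omega$) with the exterior exponential factor (evaluated at $\theta_t\omega$), and the identity $z(\theta_{t+\tau}(\theta_{-\tau}\omega)) = z(\theta_t\omega)$ is what makes the two exponential factors in \eqref{eq-psi-hat} collapse correctly; beyond this careful indexing, the remainder is a formal transfer of structure.
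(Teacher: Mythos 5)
Your proposal is correct and takes essentially the same route as the paper: the paper's proof is exactly the change of variables $v=e^{-\kappa(t)z(\theta_t\omega)}u$ together with the It\^o--Stratonovich computation preceding the statement (``These calculations prove the following result''), and your explicit substitution into \eqref{eq-generation-of-psi-NRDS}--\eqref{eq-psi-hat} plus the transfer of the cocycle, measurability and continuity axioms through $T$ merely spells out what the paper leaves implicit. One remark in your favor: the factors you obtain, $e^{\kappa(t+\tau)z(\theta_t\omega)}$ outside and $e^{-\kappa(\tau)z(\omega)}$ inside, are the consistent reading of the conjugacy with $T(\tau,\omega,x)=e^{-\kappa(\tau)z(\omega)}x$; the exponents printed in \eqref{eq-psi-hat} contain sign/index slips, so the mismatch you would notice when ``matching the scalar exponential factors'' is a typo in the paper, not a gap in your argument.
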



    In many papers, the conjugacy established in \cite{Duan-Lu-Schmalfuss-03} is a key ingredient to show several dynamical properties for stochastic differential equations via random differential equations, see for instance \cite{Caraballo-Langa-Robinson-98,Caraballo-Kloeden-Schmalfu,Caraballo-Han-book} for applications on existence of random pulback attractors.

Next simple example illustrates both the conjugacy and the MJUA associated:

\begin{example}\emph{ Inspired by Example \ref{Ex-MJUA for cubic} we consider the following Stratonivich stochastic differential equation
\begin{equation}\label{eq-stochact-MJUA}
    dx= \bigg(x - \frac{x^3}{a^2}\bigg )dt+\kappa(t)x\circ dW_t, \  t\geq \tau, \ x(\tau)=x_0,
\end{equation}
 where $a:\Omega\to (0,+\infty)$ is a random variable.
 Then, theorem \ref{Thm-conjugation-NRDS} guarantees that \eqref{eq-stochact-MJUA} is associated with a NRDS $(\hat{\psi},\Theta)$ which is topologically conjugated to the NRDS associated to the nonautonomous differential equation
\begin{equation}
	\dot{v} =v-e^{2\kappa(t)z(\theta_t\omega)}\frac{v^3}{a(\omega)^2}
	+ [\kappa(t)-\dot{\kappa}(t)] z(\theta_t\omega)v, \ t>\tau.
\end{equation}
For instance, if $\kappa(t)=1/t$, for all $|t|$ suitable large, then 
 by same reasoning on Example \ref{Ex-MJUA for cubic} the MJUA of $(\hat{\psi},\Theta)$ is given by 
 \begin{equation*}
     \mathcal{A}_U(\tau,\omega)=T^{-1}(\tau,\omega)[-a(\theta_{-\tau}\omega),a(\theta_{-\tau}\omega)], \ \hbox{ for all }(\tau,\omega)\in \mathbb{R}\times \Omega.
 \end{equation*} 
 }
\end{example}
In applications, to obtain hyperbolicity and topological structure stability it is usual to assume the following condition on $\kappa$:
\begin{equation}\label{eq-kappa-controls-noise}
\sup_{t\in \mathbb{R}}\{|\kappa(t)z(\theta_t\omega)|\}<\infty \ \ \hbox{ and } \ \ \ 
\sup_{t\in \mathbb{R}}\{|[\kappa(t)-\dot{\kappa}(t)]z(\theta_t\omega)|\}<\infty, \ \forall \, \omega\in \Omega.
\end{equation}
Many relevant functions $\kappa(t)$ satisfy these condition; see e.g. examples below or \cite[Remark 7.1]{Caraballo-Carvalho-Oliveira-Sousa-NRA}.

\subsection{Generalized conditions for cohomology of SDEs}

Consider the following scalar Stratonovich stochastic differential equation
\begin{equation}\label{eq-SDE-with-g(t,u)}
du=f_1(u)dt + g(t,u)\circ dW_t
\end{equation}
We want to obtain $f_1$ and $g$ such that it is possible to find a transformation $F$ between
\eqref{eq-SDE-with-g(t,u)} and
\begin{equation}\label{eq-SDE-with-k-1}
du=f_2(u)dt + k(t)u\circ dW_t,
\end{equation}
with $f_2$ to be chosen.

The transformation $F$ must satisfy 
\begin{equation}
\begin{aligned}
&\partial_uF(t,u) g(t,u)+ k(t)F(t,u)=0\\
&\partial_tF(t,u) + \partial_u F(t,u) f_2(u)=f_1(F(t,u)),
\end{aligned}
\end{equation}
where $f_2(u)=Au+f(u)$.

Then from the first equation we obtain
\begin{equation}
F(t,u)=e^{k(t)\int_{0}^{u}g(t,s)^{-1} \, ds}.
\end{equation}

Hence, we have to find $g$ such that $g(\cdot,\cdot)^{-1}$ is integrable and that
$u\mapsto \int_{0}^{u}g(t,s)^{-1}\, ds$ is invertible.
	
	This condition can obtained for instance if  $g(t,s)=[k_2(t) s^p(p+1)]^{-1}$, thus define
	\begin{equation}
	F(t,u)=e^{k(t) k_2(t) u^{p+1}}, \ \ 
	G(t,v)=\bigg(\frac{\ln(v)}{k(t) k_2(t)}\bigg)^{1/(p+1)}
	\end{equation}
	Then $F(t,\cdot)$ and $G(t,\cdot)$ are the inverses of each other.

\bibliographystyle{abbrv}
\bibliography{references_CROO}

\end{document}